\definecolor{darkgreen}{rgb}{0,0.4,0}
\definecolor{BrickRed}{rgb}{0.65,0.08,0}
\newtheorem{theorem}{Theorem}
\newtheorem*{theorem*}{Theorem}
\newtheorem{lemma}[theorem]{Lemma}
\newtheorem{proposition}[theorem]{Proposition}
\newtheorem{open}[theorem]{Open problem}
\newtheorem*{main-thm}{Main Theorem}
\crefname{assumption}{Assumption}{Assumptions}
\theoremstyle{definition}
\newtheorem{remark}[theorem]{Remark}
\newcommand{\eps}{\varepsilon}
\renewcommand{\P}{\mathbb{P}}
\def\Pat{\mathcal P}
\NewDocumentCommand\hidden{}{\scalerel*{\includegraphics{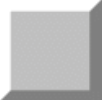}}{X}}
\NewDocumentCommand\mine{}{\scalerel*{\includegraphics{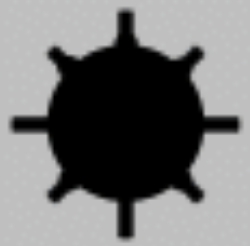}}{X}}
\def\A{\mathcal A}
\def\Z{\mathbb Z}
\def\empty{\varnothing}
\def\flag{🚩}
\def\reveal{\text{Reveal}}
\def\cell{c}
\def\ma{mine assignment}
\def\gs{grid state}
\def\Mnp{M_{n,p}}
\def\cut{n^{-1/6}}
\def\ap{ambiguous pattern}
\def\env{\text{Envelope}}
\def\Sc{S_{\text{clue}}}
\def\Smin{S^{\text{min}}}
\def\cutoff{\kappa_n}
\providecommand{\keywords}[1]
{
  \small	
  \noindent
  \textbf{{Keywords:}} #1
}
\begin{document}

\title{\textbf{Phase transition for Minesweeper}}

\author{%
Baptiste Louf\thanks{CNRS and Institut de Mathématiques de Bordeaux, France. Partially supported by ANR CartesEtPlus (ANR-23-CE48-0018) and ANR HighGG (ANR-24-CE40-2078-01).}}

\maketitle

\begin{abstract}
We prove a coarse phase transition for the game of  Minesweeper: above a certain critical mine density, the game becomes unsolvable with high probability, whereas below the critical mine density it can be solved with a linear time algorithm. 
\thispagestyle{empty}
\end{abstract}

\keywords{Phase transition, minesweeper, random discrete structures}


\section{Introduction}

Minesweeper is a classic computer game that starts with a rectangular grid whose cells have been obscured, under some of which lie mines. The player needs to determine where the mines are by revealing cells (without hitting a mine), and each revealed cell gives a clue about the number of mines in its vicinity.

\begin{figure}
\center
\includegraphics[scale=0.4]{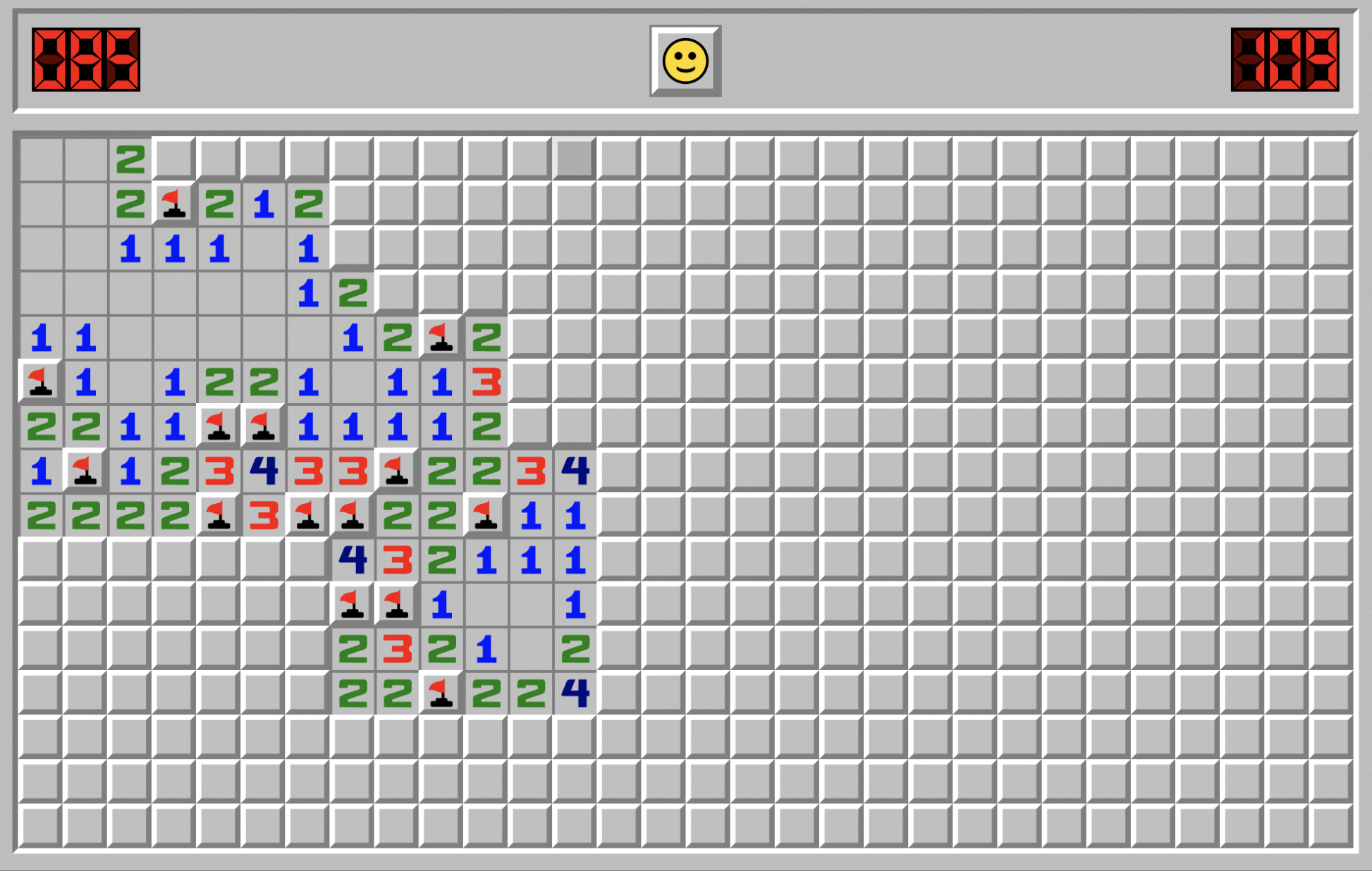}
\caption{An ongoing game of minesweeper. All figures in this paper are screenshots from the website \url{https://minesweeper.us/}.}
\end{figure}

As a mathematical problem, Minesweeper has mostly been studied from the complexity theory perspective: in~\cite{Kaye} it is shown that deciding whether a certain grid state is \emph{consistent} with a mine assignment is NP-complete, and the \emph{inference} problem (i.e. given a grid state, is there a safe cell to click on ?) is coNP-complete (see~\cite{coNP,coNPbis}). 

However, these are worst-case complexity results.  It is well known that many problems that are hard in the worst case become much more tractable when considering \emph{random} instances. A famous example is the $3$-SAT problem (which is NP-complete~\cite{Karp}): consider a random instance with a fixed \emph{density} of clauses. If the density is above a certain threshold, then the instance is most likely unsatisfiable; if it is below the threshold, it is most likely satisfiable—and finding a satisfying variable assignment can be done quickly\footnote{this very simplified presentation doesn't do justice to the richness and complexity of the research on random SAT instances; we refer to \cite{Perkins} for a more detailed account.}.This kind of behavior is referred to as a \emph{phase transition}, and similar phenomena have been observed in many other random structures. We refer to~\cite{KS,Perkins,random_graphs} and references therein for more details.

In this paper, we prove a phase transition result regarding whether a random Minesweeper instance is solvable, with respect to the \emph{mine density} (this question has already been approached experimentally in~\cite{phase}). Note that contrary to most properties where a phase transition is observed, solvability of Minesweeper is \textbf{not} a monotone property. Indeed, adding a mine to an unsolvable instance can make it solvable (see \Cref{prop_not_monotone}).

Let $M_{n,p}$ be the random minesweeper instance on a rectangular grid of size $n$ where each cell receives a mine independently with probability $p$. Our main result is the following.

\begin{theorem}\label{main_thm}
There is a coarse phase transition at $p=\Theta(n^{-1/6})$, i.e.
\begin{itemize}
\item if $p=o(n^{-1/6})$ then there exists a \textbf{linear time} algorithm that solves $\Mnp$ with high probability; 
\item if $p>>n^{-1/6}$ then with high probability no algorithm solves $\Mnp$, i.e.
\[\max_{\text{algorithms }\A}\P(\A\text{ solves }\Mnp)=o(1);\]
\item for every $c>0$, there exists $\eps$ such that, taking $p\sim cn^{-1/6}$ and $n$ large enough,
\[\eps\leq\max_{\text{algorithms }\A}\P(\A\text{ solves }\Mnp)\leq 1-\eps.\]
\end{itemize} 
\end{theorem}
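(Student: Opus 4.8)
The plan is to reduce the statement to the limiting law of a single counting statistic and then read it off. I would use the machinery behind the first two items of \Cref{main_thm}: there is a bounded‑support local obstruction, which I will call an \emph{ambiguous pattern}, such that if $\Mnp$ contains no ambiguous pattern then the linear‑time algorithm solves it, while each ambiguous pattern present in $\Mnp$ forces \emph{every} algorithm, on its way to uncovering all safe cells, to reveal a cell whose safety is not implied by the clues on the entire rest of the board. Crucially, whether that cell is safe is governed by mine randomness independent of everything the algorithm can observe, so revealing it is a blind guess, right with probability at most a fixed $\rho_\star<1$ (there being only finitely many minimal shapes). Write $X=X_n$ for the number of ambiguous patterns of $\Mnp$. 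Then $\max_{\A}\P(\A\text{ solves }\Mnp)\ge\P(X=0)-\Landauo(1)$, by running the linear‑time algorithm (which solves whenever $X=0$); conversely, conditionally on the positions and shapes of the $X$ patterns the correct guesses at distinct patterns are independent mine coins, each landing right with probability at most $\rho_\star$, so $\P(\A\text{ solves }\Mnp)\le\EE[\rho_\star^{X}]$ for every $\A$. Everything is now about the law of $X$ at $p\sim c\cut$.

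First I would compute the first moment. Summing over the $\Theta(n)$ candidate anchor positions and over the finitely many minimal shapes --- each prescribing exactly $6$ mine cells together with an $\LandauO(1)$‑size mine‑free collar --- and using $(1-p)^{\LandauO(1)}=1-\Landauo(1)$, one gets
\[
\EE[X]=\bigl(1+\Landauo(1)\bigr)\,\alpha\,n\,p^{6}\ \xrightarrow[n\to\infty]{}\ \alpha c^{6}=:\lambda(c)
\]
for an explicit constant $\alpha>0$ (a count of minimal shapes, weighted by symmetries). Here $\lambda(c)\in(0,\infty)$ for every fixed $c>0$, with $\lambda(c)\downarrow 0$ as $c\downarrow 0$ and $\lambda(c)\uparrow\infty$ as $c\uparrow\infty$, which is exactly why the window sits at $p=\Theta(\cut)$ and dovetails with the two extreme regimes of \Cref{main_thm}.

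Next I would prove a Poisson approximation, $X\xrightarrow{\,d\,}\mathrm{Poisson}(\lambda(c))$, by the Chen--Stein method. Each indicator ``an ambiguous pattern sits here'' is measurable with respect to the mines in a bounded window, and two of them are independent once the windows are disjoint, so the dependency neighbourhoods have bounded size; the Arratia--Goldstein--Gordon quantities are then $b_1=\LandauO(n)\cdot\LandauO(1)\cdot\LandauO(p^{6})\cdot\LandauO(p^{6})=\LandauO(n^{-1})$ and $b_2=\LandauO(n)\cdot\LandauO(1)\cdot\LandauO(p^{7})=\LandauO(n^{-1/6})$ --- the exponent $7$ because two \emph{distinct} overlapping minimal patterns jointly prescribe at least $7$ mine cells in a bounded region --- while $b_3=0$ since the cells are independent. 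Hence $d_{\mathrm{TV}}\bigl(X,\mathrm{Poisson}(\EE X)\bigr)\to 0$, so $\P(X=0)\to e^{-\lambda(c)}$ and $\EE[\rho_\star^{X}]\to e^{-\lambda(c)(1-\rho_\star)}$. Feeding this into the pinching above,
\[
e^{-\lambda(c)}-\Landauo(1)\ \le\ \max_{\A}\P(\A\text{ solves }\Mnp)\ \le\ e^{-\lambda(c)(1-\rho_\star)}+\Landauo(1),
\]
and since $e^{-\lambda(c)}>0$ and $e^{-\lambda(c)(1-\rho_\star)}<1$, for $n$ large both sides can be replaced by $\eps$ and $1-\eps$ with $\eps:=\tfrac12\min\bigl\{e^{-\lambda(c)},\,1-e^{-\lambda(c)(1-\rho_\star)}\bigr\}>0$, which is the claim.

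The real difficulty is not in this item but in the shared input: pinning down the right notion of ambiguous pattern and proving that a minimal one has bounded support (so $\EE X\asymp n p^{6}$ rather than being crushed by a growing collar), uses exactly $6$ mines (this is what fixes the exponent $\tfrac16$), and --- the most delicate point --- induces an ambiguity that no amount of long‑range logical inference can dispel, which is precisely what makes ``contains an ambiguous pattern $\Rightarrow$ unsolvable for everyone'' true. The only genuinely new subtlety here is bookkeeping: if one conditions on the exact number of mines instead of using i.i.d.\ cells, this couples the otherwise‑independent ``which cell is the mine'' coins across patterns, but only through a sum of $\Theta(n^{5/6})$ terms, so the coupling is $\LandauO(n^{-5/6})$ per pair of patterns and is absorbed into the error terms; the same remark is what keeps honest the independence used in the Chen--Stein step.
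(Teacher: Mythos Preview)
Your proposal is correct and follows essentially the same route as the paper: identify the minimal ambiguous patterns (six mines each, giving $\lambda(c)=2c^{6}$ and $\rho_\star=\tfrac12$ in your notation), apply Stein--Chen at criticality to get Poisson convergence for the count $X$, bound the solvability probability below by $\P(X=0)-o(1)$ via the linear-time algorithm and above via forced guessing at each occurrence. The only cosmetic difference is that your upper bound $\EE[\rho_\star^{\,X}]\to e^{-\lambda/2}$ is slightly sharper than the paper's cruder $\P(X=0)+\tfrac12\,\P(X\ge1)\to\tfrac12(1+e^{-\lambda})$, but both suffice for the statement.
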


The phase transition is driven by the emergence of small \emph{ambiguous mine patterns}. This is reminiscent of Friedgut's theorem~\cite{Friedgut} which states that for monotone properties, there is a coarse phase transition if and only if the property is well approximated by local constraints. This suggests that solvability of a random minesweeper instance is very close to being monotone. And indeed, we show it is "\emph{almost surely monotone}" in \Cref{sec_process}, where we study a random process version in which mines appear one after the other. In that same section, we also show a "\emph{hitting time property}", i.e. non-solvability coincides with the appearance of the smallest ambiguous pattern. 

The paper is organized this way: in \Cref{sec_def} we give definitions. \Cref{sec_patterns} is the main technical part of the proof, where we identify the smallest ambiguous patterns. Then in \Cref{sec_proof}, we prove the main theorem, using the results of \Cref{sec_patterns} and standard probabilistic arguments. Finally, in \Cref{sec_process}, we study the random mine process, showing a hitting time property and almost sure monotonicity.

\section{Definitions}\label{sec_def}

Given a rectangular grid $G$, we define the \emph{neighborhood} $N(\cell)$ of a cell $\cell$ as the cell itself plus the (up to 8) cells that touch it. This induces a natural distance $d$ on the cells of the grid\footnote{the graph distance in the neighbor graph.}. A \emph{mine assignment} $M$ on $G$ is a function that assigns the value $\empty$ ("\emph{empty}") or $\mine$ ("\emph{mine}") to each cell $\cell\in G$, while a \emph{grid state} $S$ on $G$ is a function that assigns to each cell $\cell\in G$ a value in $\{\hidden,0,1,2,3,4,5,6,7,8,\flag,\mine\}$. We say that $\cell$ is a \emph{clue} if $S(\cell)\in\Z$, a \emph{flag} if  $S(\cell)=\flag$ and \emph{hidden} if $S(\cell)=\hidden$.
 A mine assignment $M$ and a grid state $S$ on the same grid $G$ are said to be \emph{consistent} if for each cell $\cell$, we have either :
\begin{itemize}
\item $S(\cell)=\hidden$;
\item $S(\cell)=|\{\cell'\in N(\cell)|M(\cell')=\mine\}|$;
\item $S(\cell)\in\{\mine,\flag\}$ and $M(\cell)=\mine$.
\end{itemize} 

Finally, if a \ma{} $M$ and a \gs{} $S$ are consistent  with each other, and if $\cell\in S$ is hidden, then let $\reveal(M,S,\cell)$ be the \gs{} that equals $S$ except at $\cell$, where it is equal to $\mine$ if $M(\cell)=\mine$, and to $|\{\cell'\in N(\cell)|M(\cell')=\mine\}|$ otherwise. An algorithm $\mathcal A$ is a function that associates to each \gs{} $S$ one of its hidden cells. Given an algorithm $\A$ and a \ma{} $M$ of the grid $G$, we can define the sequence of \gs{s} $S^{\A,M}_t$ by
\begin{itemize}
\item $S^{\A,M}_0=\hidden^G$;
\item $S^{\A,M}_{t+1}=\reveal\left(M,S^{\A,M}_t,\A\left(S^{\A,M}_t \right)\right)$.
\end{itemize}
We say that $\A$ \emph{solves} $M$ if there is a $t$ such that for every $\cell\in M$, we have that $S^{\A,M}_t(\cell)\neq \mine$ and if $S^{\A,M}_t(\cell)=\hidden$ then $M(\cell)=\mine$.

\begin{remark}
Without loss of generality, we made three simplifying assumptions:
\begin{itemize}
\item we do not allow flagging for algorithms here (in real life, they just help the player remember where the mines are). However, we will allow them for \gs{s}, as it will simplify the exposition for ambiguous patterns (see \Cref{sec_patterns});
\item the algorithms we consider do not take the number of mines as input;
\item we only consider deterministic algorithms.
\end{itemize}
The last two assumptions are not restrictive for our purposes\footnote{as long as we don't want to consider random algorithms whose source of randomness is \textbf{not} independent with the mine placement, which would defeat the purpose of the game anyways.}, indeed we will consider the set of all possible algorithms, i.e. all possible moves at any time.
\end{remark}

Let $a_N$ and $b_N$ two integer sequences that both tend to infinity as $N\to\infty$. For each $N$, set  $n=n(N):=a_N\times b_N$ and let $G_n$ the rectangular $a_N\times b_N$ grid. From now on, we forget about $N$ and only talk about $n$, the size of the grid. This will lead to slight abuses of notation, such as saying "as $n\to \infty$", omitting that we are working along a subsequence. In what follows, by "\emph{with high probability}" (abbreviated as "\emph{whp}"), we mean "with probability tending to $1$ as $n\to \infty$".

Given $p=p(n)$, let $\Mnp$ be the mine assignment on $G_n$ such that each cell receives a mine independently with probability $p$.

\section{Ambiguous patterns}\label{sec_patterns}

A \emph{pattern} is a \ma{} on a grid such that there is no mine on the first and last two rows and columns, but there is a mine on the third and third-to-last row and column. Roughly speaking, this way we ensure that patterns are independent (in terms of solving), but that we don't encode the same pattern multiple times by adding empty rows or columns. This definition also rules out patterns that touch the border of the grid. In fact, whp in $\Mnp$ mines will not appear near the border (see \Cref{lem_no_mine_border}). If a subgrid of the \ma{} $M$ matches with a pattern $\Pat$, we say that $M$ \emph{contains an occurrence} of $\Pat$ and we write $\Pat\subset M$.


A \gs{} $S$ is said to be \emph{ambiguous} if it contains at least one hidden cell but no mines and for each of its hidden cells $\cell$, there exists two patterns $\Pat$ and $\Pat'$, both consistent with $S$, such that $\Pat(\cell)=\mine$ and $\Pat'(\cell)=\empty$. Any pattern that is consistent with an ambiguous \gs{} is also said to be ambiguous.

In other words, if, at some point of the game, the player is left with an ambiguous \gs{}, they will not be able to solve it for sure. 

The main result of this section (and the key ingredient of the proof of our main theorem) concerns the smallest possible \ap{s}.

\begin{proposition}\label{prop_patterns}
The only\footnote{remember that we are not allowed to use the border of the grid here, otherwise one could form an ambiguous pattern using only $3$ mines and a corner.} ambiguous patterns with $6$ mines or less  are $\Pat_1$ and $\Pat_2$ depicted in~\Cref{fig_patterns}.
\end{proposition}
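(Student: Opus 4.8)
The plan is to characterize ambiguous patterns by a careful case analysis on the number of mines, organized around the following structural fact: a pattern $\Pat$ is ambiguous precisely when there is a grid state $S$ consistent with $\Pat$ (obtained by revealing some subset of the non-mine cells) such that every hidden cell of $S$ is genuinely undetermined, i.e. can be flipped to empty or mine while remaining consistent with $S$. So the first step is to reformulate the problem: given a pattern, the ``most revealing'' grid state is the one where the player reveals every cell not forced to be hidden; ambiguity then amounts to the existence of a nonempty set $H$ of cells and two mine assignments agreeing off $H$, both consistent, with each cell of $H$ flipped by some consistent pair. A useful reduction is that the symmetric difference of two consistent mine assignments must be ``balanced'' around every clue cell: if $M$ and $M'$ are both consistent with $S$ and differ on a set $D$, then for every clue $\cell$, $|D\cap N(\cell)\cap\{M=\mine\}| = |D\cap N(\cell)\cap\{M'=\mine\}|$. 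This is the combinatorial heart: ambiguity requires a configuration of ``movable'' mines that is locally neutral with respect to all surrounding clues.

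Next I would set up the induction/enumeration on mine count $m = 1, 2, \dots, 6$. For very small $m$ one argues directly that no such neutral movable configuration exists: with one or two mines the hidden region is too constrained, and the boundary clues pin everything down. The key lemma I would isolate is a lower bound on the number of mines needed to form any ambiguous configuration — intuitively, the ``movable'' part alone (the cells in the symmetric difference of two witnessing assignments) must contain several mines, because each such mine needs to be ``compensated'' elsewhere in every shared clue's neighborhood, and the geometry of the $3\times 3$ neighborhoods forces these compensating mines to be distinct and numerous. I expect the count to bottom out at exactly the two patterns $\Pat_1,\Pat_2$ of Figure~\ref{fig_patterns}: one should recognize these as the minimal ``sliding'' ambiguities (a pair of mines that can occupy two positions in a $1\times 3$ or $2\times 2$ window with identical clue profile), padded out to satisfy the no-mine-near-border and third-row-has-a-mine normalization in the definition of \emph{pattern}, which is what pushes the mine count up to $6$ rather than something smaller.

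The concrete enumeration then proceeds by: (i) fixing the support of the movable part $D$ up to translation and reflection (small connected pieces with the local neutrality property — this is a finite check, the families being essentially ``two mines in a domino's two placements'' and ``three-in-a-row shifting''); (ii) for each candidate movable core, determining what fixed mines and revealed clues must surround it to (a) make both assignments consistent with the common revealed state and (b) leave no hidden cell that is actually forced — every hidden cell must participate in the ambiguity; (iii) adding the minimal padding of fixed mines required by the pattern normalization (mine on the third row/column), and checking the total is $> 6$ unless the core is one of the two canonical ones; (iv) finally verifying that $\Pat_1$ and $\Pat_2$ genuinely are ambiguous, which is a direct check exhibiting the two witnessing mine assignments for each. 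I would also dispatch the degenerate possibilities — e.g. a single hidden cell that is ``locally ambiguous'' but globally forced by a far-away clue chain — by noting that in a pattern the clue region is finite and the neutrality constraint propagates, so nothing non-local can rescue a bad core.

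The main obstacle I anticipate is step (ii): ruling out spurious small cores. There are many ways to arrange two or three movable mines with locally balanced clue profiles, and for each one must argue that the surrounding forced mines plus the normalization overhead exceed six — this is where the bookkeeping is delicate, because a clever arrangement might reuse forced mines to serve double duty (counting toward both a clue constraint and the third-row requirement). Controlling this ``sharing'' is the crux: I would handle it by a weight/discharging argument or by exhaustively listing the $O(1)$ possible cores up to symmetry and checking each, leaning on the fact that the neighborhood graph on a grid has bounded degree so the number of genuinely distinct small configurations is small. The payoff is that once this is done, $\Pat_1$ and $\Pat_2$ fall out as the unique survivors, and the ``$6$ mines'' threshold is explained by exactly the padding cost.
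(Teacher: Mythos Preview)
Your reformulation via the symmetric difference $D$ of two witnessing assignments and the ``local neutrality'' constraint at every clue cell is correct and useful. But your explanation of \emph{why} the mine count is forced up to six is wrong, and this is exactly where your plan has a gap. The four ``extra'' mines in $\Pat_1,\Pat_2$ are \emph{not} padding for the third-row/third-column normalization in the definition of pattern; that normalization only fixes the bounding box and never adds mines. The four extra mines are forced by a structural constraint you have not identified: they sit at the four corners of the \emph{envelope} $\env(S)=\bigcup_{S(\cell)=\hidden}N(\cell)$, and a corner cell of the envelope has exactly one hidden cell in its neighborhood, so it cannot be a clue (a clue with a single hidden neighbor would determine that neighbor) and must therefore be a flag, i.e.\ a mine in the underlying pattern. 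This corner-flag lemma is what drives the count to six, not the normalization.

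The paper's proof is organized entirely around the envelope rather than around enumerating movable cores. One shows every corner of $\env(S)$ is a flag; that $S$ has at most five flags (else there are no hidden mines, but then every border cell of the envelope would have to be a flag, which is too many); hence $\env(S)$ has at most five corners, and a short argument rules out five, so $\env(S)$ is a rectangle. One then bounds its dimensions: width $3$ forces six flags; width $\geq 6$ yields four clue cells with pairwise disjoint neighborhoods and hence four hidden mines, too many; width $5$ is eliminated by a $3\times 3$ linear system on the top two rows that determines the second row. This pins $\env(S)$ to $4\times 4$ and $S$ to $\Smin$. Your step~(ii) --- surrounding each movable core with whatever fixed mines are needed --- is precisely where the corner-flag mechanism would bite, but since you attribute the overhead to normalization rather than to the geometry of the envelope, your plan as written would not discover it, and the promised ``finite check'' in step~(i)/(ii) has no organizing principle to make it tractable. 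Your candidate cores are also off: the movable part in $\Pat_1,\Pat_2$ is a $2\times 2$ block with the two diagonals swapped, not a $1\times 3$ slide.
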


\begin{figure}
\center
\includegraphics[width=0.3\textwidth]{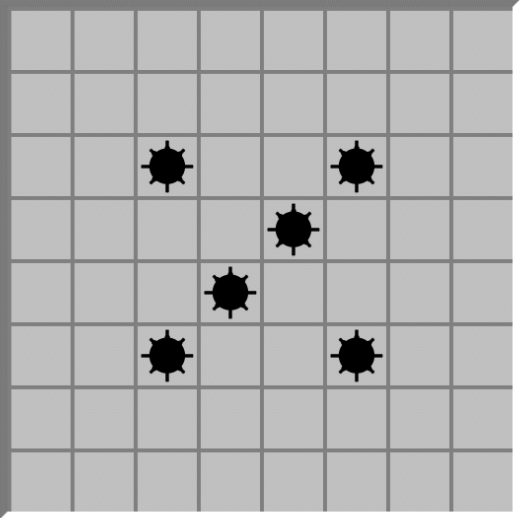}
\includegraphics[width=0.3\textwidth]{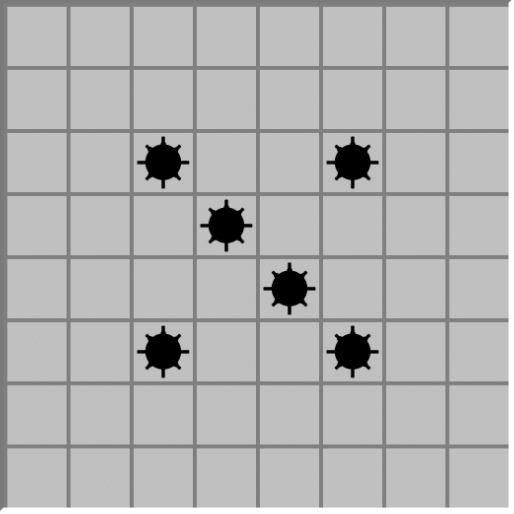}
\includegraphics[width=0.3\textwidth]{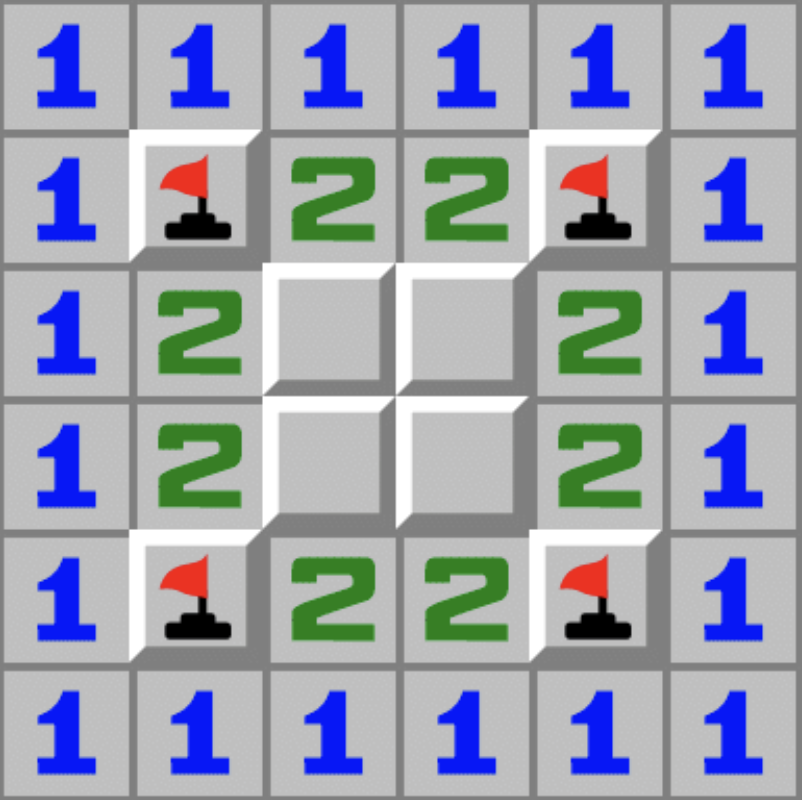}
\caption{The two smallest ambiguous patterns $\Pat_1$ and $\Pat_2$ and their corresponding ambiguous \gs{} $\Smin$.}\label{fig_patterns}
\end{figure}

Let $\Smin$ be the ambiguous \gs{} associated with $\Pat_1$ and $\Pat_2$. Clearly, if the player encounters $\Smin$, they only have a $50\%$ chance to get it right, since $\Pat_1$ and $\Pat_2$ are equally likely in $\Mnp$.  More generally, we have the following result.

\begin{lemma}\label{lem_k_ambiguous_patterns}
Conditionally on $\Mnp$ containing at least $k<<n$ occurrences of $\Pat_1$ and $\Pat_2$, then for any algorithm $\A$
\[\P(\A\text{ solves }\Mnp)\leq 2^{-k}+o(1).\]
\end{lemma}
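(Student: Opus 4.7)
The plan is a symmetry-and-swap argument: since $\Pat_1$ and $\Pat_2$ have the same mine count and are both consistent with the unique ambiguous state $\Smin$, each of the $k$ occurrences is equally likely to be of type $\Pat_1$ or $\Pat_2$, and the clues revealed by any algorithm give no information to distinguish between the two, so each occurrence independently contributes a factor $\tfrac12$ to the solve probability.

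Concretely, I would first condition on the positions $P_1,\dots,P_k$ of the $k$ occurrences and on the mines in the rest of the grid, restricting further to the ``good event'' that these occurrences can be taken pairwise at distance at least some constant (larger than the pattern diameter) from each other and from every other mine of $\Mnp$. A standard first-moment argument shows this good event has conditional probability $1-o(1)$, which accounts for the $o(1)$ in the statement. On the good event, the pattern rectangles of the $P_i$, together with their $1$-neighborhoods, are pairwise disjoint, and by the border condition in the definition of a pattern any cell lying outside the rectangle of some $P_i$ has no mine of $P_i$ in its neighborhood.

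For $\sigma\in\{1,2\}^k$, let $M^\sigma$ denote the mine assignment obtained from the conditioning by placing $\Pat_{\sigma_i}$ at $P_i$; all $2^k$ choices are equiprobable. The key combinatorial claim is that for any deterministic $\A$, at most one $\sigma$ satisfies ``$\A$ solves $M^\sigma$''. To see this, I observe that for any cell $c$ which is a non-mine in both $M^\sigma$ and $M^{\sigma'}$ the clue value at $c$ is the same in both assignments: if $c$ lies outside all pattern rectangles the assertion is immediate, and if $c$ lies inside some rectangle $P_i$ then the structural fact that each hidden cell of $\Smin$ is a mine in exactly one of $\Pat_1,\Pat_2$ (visible on the picture of $\Smin$) forces $c$ to be a clue cell of $\Smin$, so its clue value is fixed by the consistency of $\Smin$ with both patterns. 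Therefore the deterministic runs of $\A$ on $M^\sigma$ and on $M^{\sigma'}$ agree as long as no mine is revealed in either; if $\A$ solved both, no mine is ever revealed, the runs coincide throughout, every non-mine of $M^\sigma$ is also revealed in the run on $M^{\sigma'}$, and since $\sigma\neq\sigma'$ at least one of these cells is a mine of $M^{\sigma'}$---contradiction.

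Putting it together, on the good event one has $\P(\A\text{ solves }\Mnp\mid\cdot)=2^{-k}\sum_{\sigma}\mathbf{1}[\A\text{ solves }M^\sigma]\leq 2^{-k}$, and the complementary event contributes the $o(1)$. The main obstacle I expect is the first step---extracting $k$ well-isolated occurrences with failure probability $o(1)$, especially if $k$ is allowed to grow with $n$. Once this geometric clean-up is in place, the swap argument itself is short and relies only on the consistency of $\Smin$ with both $\Pat_1,\Pat_2$ and on the border condition of patterns.
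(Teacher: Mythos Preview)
The paper does not actually supply a proof of this lemma; it states the result immediately after the sentence ``Clearly, if the player encounters $\Smin$, they only have a $50\%$ chance to get it right, since $\Pat_1$ and $\Pat_2$ are equally likely in $\Mnp$'' and leaves it at that. Your swap/symmetry argument is exactly the natural formalization of that sentence and is correct: conditioning on the positions and on the rest of the grid, the $2^k$ choices of $\sigma\in\{1,2\}^k$ are equiprobable (both patterns have six mines), and a deterministic $\A$ produces identical runs on $M^\sigma$ and $M^{\sigma'}$ until it reveals a cell that is a mine in one of them, so it can solve at most one element of each orbit.

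One remark on the step you flagged as the main obstacle. The two-cell-wide mine-free border in the definition of a pattern already does most of the isolation work for you: a cell outside an $8\times 8$ occurrence block is at distance at least $3$ from the central $2\times 2$, so its clue is unaffected by a swap there; and inside the block, any cell that is a non-mine in \emph{both} $\Pat_1$ and $\Pat_2$ is a clue cell of $\Smin$ whose neighbourhood lies entirely inside the block, so its clue is fixed by $\Smin$-consistency regardless of outside mines. Likewise, the border forces the central $2\times 2$'s of distinct occurrences to be disjoint and a swap at one occurrence cannot create or destroy an occurrence elsewhere. So the orbit argument can be run directly on the event ``at least $k$ occurrences'' with essentially no geometric clean-up, and the $+\,o(1)$ in the statement is slack rather than a genuine error term. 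Your proposal is therefore a complete proof once you record these two structural observations about $\Pat_1,\Pat_2$.
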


Before getting to the proof of \Cref{prop_patterns}, let us prove that Minesweeper is not monotone:
\begin{proposition}\label{prop_not_monotone}
Solvability of Minesweeper is not a monotone property.
\end{proposition}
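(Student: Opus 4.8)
The plan is to exhibit two mine assignments $M\subseteq M'$ on the same grid $G_n$, with $M'$ obtained from $M$ by adding a single mine, such that $M$ is not solvable (the player is eventually forced to guess) while $M'$ is. This shows that solvability is not a decreasing property; since it is also trivially not increasing — the all-empty assignment is solvable and is contained in the assignment consisting of a single copy of $\Pat_1$ and no other mine, which is not solvable — it follows that solvability is not monotone.

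For the unsolvable instance, fix $n$ large and let $M$ be the mine assignment on $G_n$ with exactly one occurrence of the pattern $\Pat_1$ of \Cref{prop_patterns}, placed away from the border (as the definition of a pattern allows) and no other mine. Even granting the player a safe first move, any play that thereafter reveals only cells whose safety is forced by the clues seen so far uncovers all distant cells (which show $0$) and all cells bordering the copy of $\Pat_1$ (which show the appropriate clues), and reaches the ambiguous grid state $\Smin$; by confluence, this saturated state does not depend on the order of the moves. By the definition of an ambiguous grid state, every hidden cell of $\Smin$ is a mine in some consistent pattern, so no further move is ever forced, and yet $\Smin$ still has hidden cells that are empty in $\Pat_1$, namely the cells on which $\Pat_1$ and $\Pat_2$ disagree. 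Hence the player can never finish, and $M$ is not solvable.

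For the solvable instance, take $M'=M\cup\{c^{\ast}\}$, where the cell $c^{\ast}$ is chosen — and this is where one must inspect the concrete shape of $\Pat_1$ in \Cref{fig_patterns} — so that the new mine simultaneously destroys the occurrence of $\Pat_1$ and produces clues that determine the previously contested cells. One then checks directly that on $M'$ the forced-move solver uncovers every non-mine cell without ever guessing; since $M'$ has only $7$ mines, all lying in a bounded cluster, this is a finite verification. Equivalently, \Cref{prop_patterns} names $\Pat_1$ and $\Pat_2$ as the only ambiguous patterns with at most $6$ mines, so once $c^{\ast}$ is chosen with $M'$ containing neither of them, any ambiguous pattern occurring in $M'$ would have to use all $7$ of its mines, and it suffices to verify that the $7$-mine cluster around $c^{\ast}$ is not itself an ambiguous pattern. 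In either form one concludes that $M'$ is solvable.

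I expect this finite verification for the chosen $c^{\ast}$ to be the only real content of the argument, and the only point where the explicit patterns are needed; everything else is bookkeeping around the definitions of \Cref{sec_def}. An even more elementary variant avoids \Cref{prop_patterns} altogether: place at a corner of $G_n$ the classical ambiguous configuration with $3$ mines alluded to in the footnote of \Cref{prop_patterns}, let $M$ be the completion of it in which the corner cell is empty, and let $M'$ add one mine in that corner cell; this raises the relevant clue from $1$ to $2$, which forces its two hidden neighbours to be mines and makes the region solvable.
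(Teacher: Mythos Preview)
Your main construction is exactly the paper's: start from $\Pat_1$, add one well-chosen mine, and check that the resulting $7$-mine configuration is no longer ambiguous. The paper pins down the extra mine via \Cref{fig_not_monotone} and leaves the verification as an exercise; you leave both the choice of $c^{\ast}$ and the verification unspecified, so your write-up is at the same level of rigor, just slightly less concrete.

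One remark on your attempted shortcut through \Cref{prop_patterns}: that proposition only classifies ambiguous patterns with at most six mines, so knowing that $M'$ contains neither $\Pat_1$ nor $\Pat_2$ as a sub-configuration does \emph{not} by itself preclude $M'$ from being ambiguous --- a $7$-mine pattern can perfectly well be ambiguous without containing any $6$-mine ambiguous sub-pattern. You end up acknowledging this (``it suffices to verify that the $7$-mine cluster \ldots\ is not itself an ambiguous pattern''), so the detour through \Cref{prop_patterns} buys nothing; the finite check is unavoidable either way.

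Your corner-based alternative, on the other hand, is a genuinely different and arguably cleaner route that the paper does not take. With mines at $(1,2)$, $(1,3)$, $(2,3)$ (and $(1,1)$ empty) one gets an ambiguous $3$-mine configuration at the corner, since the only clues adjacent to $\{(1,1),(1,2)\}$ are $(2,1)$ and $(2,2)$, both of which see the pair symmetrically; adding a mine at $(1,1)$ raises the clue at $(2,1)$ to $2$, which forces both of its hidden neighbours to be mines and dissolves the ambiguity. This example is smaller and the verification is immediate, at the cost of using the boundary of the grid --- which is fine for the proposition as stated, though it falls outside the paper's formal notion of ``pattern''.
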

\begin{proof}
We leave it as an exercise to the reader to show that the pattern in \Cref{fig_not_monotone} is not ambiguous, although $\Pat_1$ is ambiguous.

\begin{figure}
\center
\includegraphics[scale=0.3]{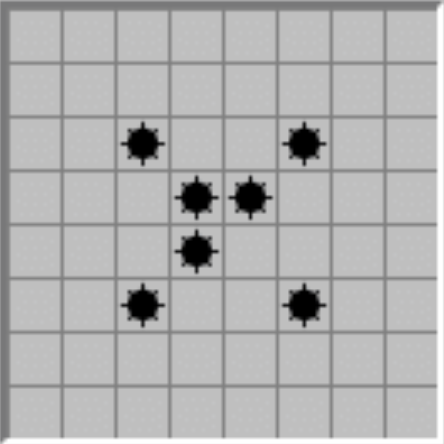}
\caption{Adding a mine to the ambiguous pattern $\Pat_1$ to create a non-ambiguous pattern.}\label{fig_not_monotone}
\end{figure}
\end{proof}

\subsection{Proof of \Cref{prop_patterns}}

Given an ambiguous \gs{} $S$ on the \ma{} $M$, its \emph{envelope} $\env(S)$ is the union of the neighborhoods of its hidden cells (see \Cref{fig_envelope}), i.e. \[\env(S)=\bigcup_{\cell\in S|S(\cell)=\hidden} N(\cell).\]
The \emph{border cells} of $\env(S)$ are the cells $\cell\in\env(S)$ such that $N(\cell)\cap \env(S)<9$ and the \emph{corner cells} are the border cells who have at least two of their orthogonal neighbors outside $\env(S)$.

For a clue cell $\cell\in S$, let
\[\Sc(\cell)=S(\cell)-|\{\cell'\in N(\cell)|S(\cell')=\flag\}|.\]

\begin{figure}
\center
\includegraphics[scale=0.5]{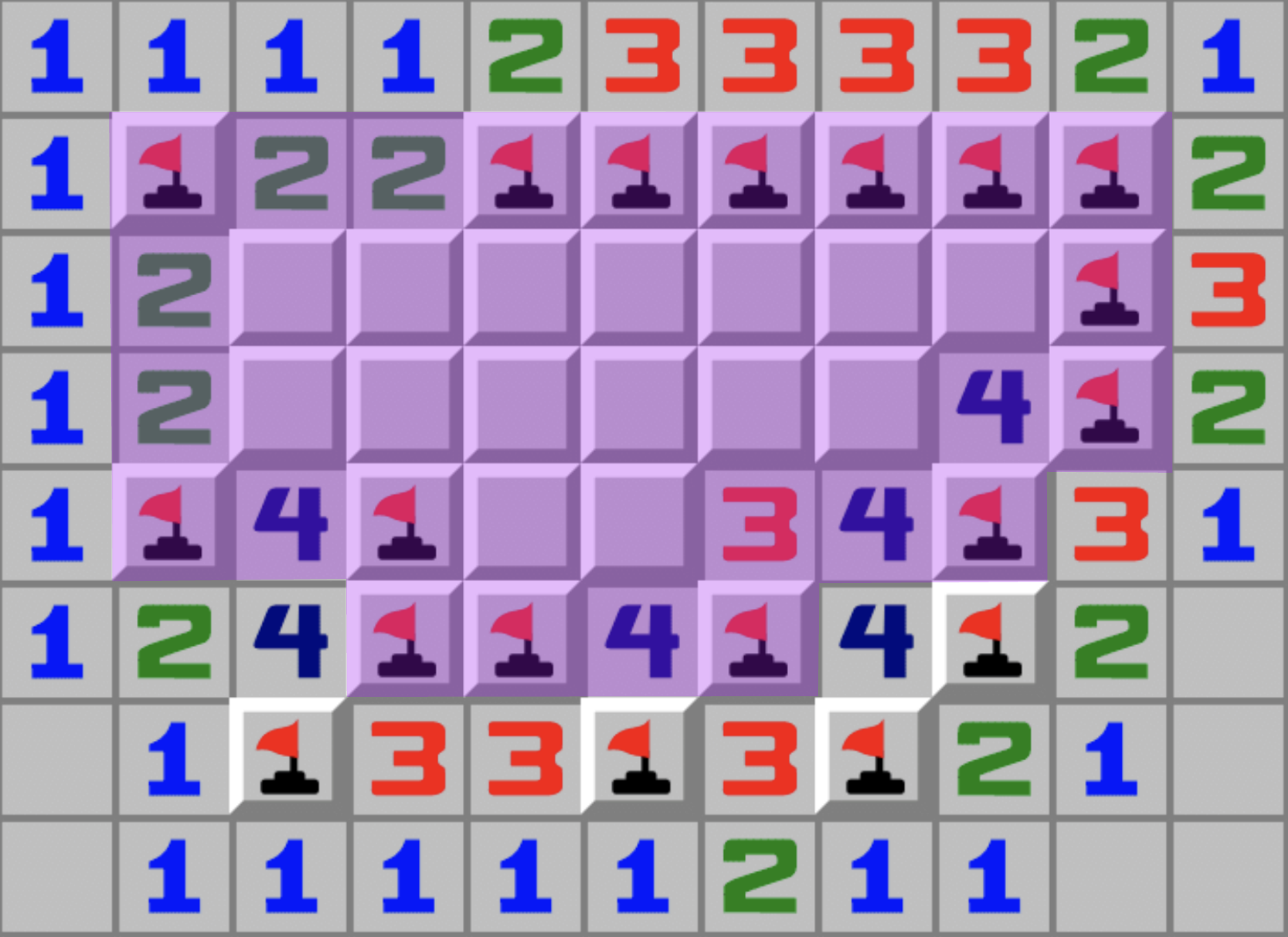}
\caption{An ambiguous \gs{} and its envelope in purple.}\label{fig_envelope}
\end{figure}

We start with two lemmas about envelopes.

\begin{lemma}\label{lem_clues_in_envelopes}
Let $S$ be an ambiguous \gs{}. If $\cell\in\env(S)$ is a clue, then $\Sc(\cell)>0$ and $N(\cell)$ contains at least two hidden cells. 
\end{lemma}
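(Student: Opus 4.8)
The plan is to prove the contrapositive of each assertion separately, working directly from the definitions of ambiguous grid state and envelope.

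First, suppose $\cell \in \env(S)$ is a clue with $\Sc(\cell) \le 0$. Since flags carry mines, every flag neighbor of $\cell$ contributes to its count; the quantity $\Sc(\cell)$ is exactly the number of \emph{additional} mines that must lie among the non-flag cells of $N(\cell)$ in any mine assignment consistent with $S$. An ambiguous grid state has no mines displayed (only hidden cells, clues, and flags), and consistency forces $S(\cell) = |\{\cell' \in N(\cell) \mid M(\cell') = \mine\}|$ for the actual assignment $M$; more to the point, the same must hold for \emph{every} pattern consistent with $S$. If $\Sc(\cell) \le 0$, then the flags already account for all of $\cell$'s mines, so every cell in $N(\cell)$ that is not a flag must be empty in every consistent pattern — in particular every hidden cell $\cell' \in N(\cell)$ satisfies $\Pat(\cell') = \empty$ for all consistent $\Pat$. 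But $\cell \in \env(S)$ means $\cell$ lies in the neighborhood of some hidden cell $\cell_0$; since clues are not hidden, $\cell_0 \ne \cell$, and $\cell_0 \in N(\cell)$ is itself hidden and not a flag, so $\cell_0$ is forced empty in every consistent pattern. This contradicts ambiguity at $\cell_0$, which demands a consistent pattern $\Pat'$ with $\Pat'(\cell_0) = \mine$. Hence $\Sc(\cell) > 0$.

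Second, suppose $\cell \in \env(S)$ is a clue but $N(\cell)$ contains at most one hidden cell. Again $\cell \in \env(S)$ gives a hidden cell $\cell_0 \in N(\cell)$, so there is exactly one hidden cell in $N(\cell)$, namely $\cell_0$, and all other cells of $N(\cell)$ are clues or flags. Then in any pattern $\Pat$ consistent with $S$, the mine count of $N(\cell)$ is determined: the flags contribute a fixed amount and the clues contribute nothing (they are empty, being integer-valued under $S$ hence empty under $M$ — and likewise under any consistent pattern, since a pattern is a mine assignment consistent with $S$ and $S$ assigns it an integer). So $\Pat(\cell_0)$ is forced: it equals $\mine$ if $\Sc(\cell) = 1$ and $\empty$ if $\Sc(\cell) = 0$, and no other value of $\Sc(\cell)$ is possible with a single hidden neighbor. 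Either way $\Pat(\cell_0)$ takes the same value for every consistent $\Pat$, contradicting ambiguity at $\cell_0$. Therefore $N(\cell)$ contains at least two hidden cells.

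The only subtlety — and the step I would be most careful about — is the claim that ``consistent with $S$'' pins down the mine-status of flag cells and clue cells of $N(\cell)$ uniformly across \emph{all} patterns, not just the true $M$. This is immediate from the definition of consistency: a flag cell forces $M(\cell') = \mine$, and an integer-valued cell forces $M(\cell')$ to equal its neighborhood mine count, which in particular forces the cell itself to be empty (a cell with $S(\cell') \in \Z$ cannot have $M(\cell') = \mine$, else consistency would require $S(\cell') \in \{\mine, \flag\}$). So once we restrict attention to the cells of $N(\cell)$, the only freedom any consistent pattern has is at the hidden cells, and with $\le 1$ hidden cell there is no freedom at all; this is exactly what kills ambiguity. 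No counting of patterns or case analysis on the shape of $S$ is needed.
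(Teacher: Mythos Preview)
Your proof is correct and follows essentially the same approach as the paper's, which simply observes that a single hidden neighbour would have its mine status determined by the value of $\Sc(\cell)$; you are just more explicit, treating the two conclusions separately and spelling out why the $\Sc(\cell)>0$ claim also follows (the paper's proof only addresses the ``two hidden cells'' part directly). One small quibble: your parenthetical justification that a clue cell cannot carry a mine (``else consistency would require $S(\cell')\in\{\mine,\flag\}$'') is not literally what the consistency definition says, but this is the same harmless convention the paper relies on throughout.
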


\begin{proof}
By definition of the envelope, $N(\cell)$ contains at least one hidden cell. Assume by contradiction that it contains exactly one, and call it $\cell'$. Then $\Sc(\cell)=0$ or $1$, and its value prescribes whether $\cell'$ hides a mine or not, a contradiction.
\end{proof}

\begin{lemma}\label{lem_corner_flag}
Let $S$ be an ambiguous \gs{}. Border cells of $\env(S)$ cannot be hidden, and any corner cell of $\env(S)$ is a flag.
\end{lemma}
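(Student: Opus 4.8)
The plan is to argue in two stages: first that no border cell of the envelope can be hidden, and then that every corner cell must in fact be a flag. For the first claim, suppose a border cell $\cell$ is hidden. Since $S$ is ambiguous, there exist patterns $\Pat$ and $\Pat'$ consistent with $S$ with $\Pat(\cell)=\mine$ and $\Pat'(\cell)=\empty$. Because $\cell$ is a border cell, $N(\cell)$ contains a cell $\cell''$ lying outside $\env(S)$; by definition of the envelope, $\cell''$ is not adjacent to any hidden cell, so in any consistent pattern the neighborhood $N(\cell'')$ is entirely determined by the (non-hidden, hence fixed) values of $S$ — wait, more precisely, every hidden cell adjacent to $\cell''$ would put $\cell''$ inside the envelope, so $\cell''$ has no hidden neighbor and thus its mine-count $S(\cell'')$ is already realized, and in particular $\cell''$ itself is not hidden. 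The point is that $\cell''$ is a cell whose state is a fixed clue or flag, and whichever it is, the status of its neighbors (including $\cell$, if $\cell \in N(\cell'')$) is pinned down: actually the cleanest route is to note that a hidden border cell $\cell$ would have to be adjacent to some clue $\cell''$ which, by \Cref{lem_clues_in_envelopes} applied appropriately, constrains it; but the sharper observation is simply that ambiguity forces $\cell$ to be genuinely undetermined, and I will show a border cell cannot be. I will instead run the argument through a counting/flow constraint: the clues adjacent to $\env(S)$ must simultaneously admit a mine and a non-mine at every hidden cell, and a hidden cell sitting at the border sees too few other hidden cells for this flexibility to propagate. The hard part will be making "too few hidden neighbors" precise, and this is exactly where \Cref{lem_clues_in_envelopes} is used: any clue in $N(\cell)$ has $\ge 2$ hidden neighbors, so I can try to locally flip mine/non-mine assignments.

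For the clean argument I would proceed as follows. Let $\cell$ be a border cell and let $\cell_0 \notin \env(S)$ be an orthogonal neighbor of $\cell$ witnessing this (for a corner cell, pick two such orthogonal neighbors $\cell_0,\cell_1$). The cell $\cell_0$, having no hidden neighbor, is either a fixed clue or a flag or an empty-but-off-envelope cell; in all cases it is not hidden. Now consider the clue cells that are common neighbors of $\cell$ and $\cell_0$ — geometrically, when $\cell_0$ is, say, directly above $\cell$, the cell $\cell$ has at most the three cells below it plus its two horizontal neighbors as hidden-candidate neighbors, so $\cell$'s own neighborhood contains at most $5$ cells of $\env(S)$. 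More importantly, I would look at a clue $\cell''$ with $\cell \in N(\cell'')$: such a clue exists since $\cell \in \env(S)$ means $\cell$ is adjacent to some hidden cell, and in a pattern that hidden cell's mine-status is constrained by clues. The idea is to show that if $\cell$ were hidden, one could toggle its value while repairing the rest, contradicting that some clue strictly separates the $\Pat$- and $\Pat'$-assignments.

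Honestly, I think the intended proof is more elementary and geometric than the above groping suggests, so let me commit to the following clean version, which I expect is what the author does. First claim (border cells are not hidden): if $\cell$ is a hidden border cell, it has a neighbor $\cell_0 \notin \env(S)$. Being hidden, $\cell$ must itself lie in $\env(S)$ (trivially, as $\cell \in N(\cell)$), so fine. The real content: $\cell$ is adjacent to some clue $\cell''$ inside $\env(S)$ (else nothing constrains $\cell$ and it cannot be genuinely ambiguous in a minimal/consistent way — more carefully, every hidden cell in an ambiguous state is adjacent to at least one clue, otherwise both $\Pat$ and $\Pat'$ with that cell toggled and everything else equal are consistent, which is allowed, so this needs care; but a cell with no clue neighbor at all produces no contradiction, so the statement "$\cell$ adjacent to a clue" is not automatic). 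Given the delicacy, my concrete plan is: enumerate the possible local configurations of a border cell of $\env(S)$ up to symmetry (there are few, classified by which of the $8$ neighbors lie outside $\env(S)$ — border means at least one does), and in each case use \Cref{lem_clues_in_envelopes} (every clue in the envelope has $\ge 2$ hidden neighbors and positive $\Sc$) to derive a contradiction with $\cell$ being hidden, by exhibiting a consistent pattern forced to put a mine (or forced to put none) at $\cell$. Then for the second claim: a corner cell $\cell$ has two orthogonal neighbors outside $\env(S)$; having shown it is not hidden, and it cannot be a clue (a clue strictly inside would, by \Cref{lem_clues_in_envelopes}, need $\ge 2$ hidden neighbors, but a corner cell's neighborhood meets $\env(S)$ in only the $\le 3$ cells "inward" of the corner, and one checks this forces $\Sc(\cell) \le 1$ with a determined hidden cell — contradiction, mirroring the proof of \Cref{lem_clues_in_envelopes}), so $\cell$ is a flag. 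The main obstacle throughout is the careful case analysis of envelope geometry near border and corner cells and verifying the mine-count arithmetic ($\Sc$-values) rules out each possibility; I would organize it by the number and arrangement of off-envelope neighbors and lean on \Cref{lem_clues_in_envelopes} as the workhorse.
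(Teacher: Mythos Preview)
You are missing the one-line argument for the first claim, and this leads you into an unnecessary and unclear case analysis. If $\cell$ is hidden, then by the very definition of the envelope we have $N(\cell) \subset \env(S)$; hence $|N(\cell) \cap \env(S)| = 9$ and $\cell$ is \emph{not} a border cell. That is the entire proof of the first claim, and it uses nothing about ambiguity, about patterns $\Pat, \Pat'$, or about \Cref{lem_clues_in_envelopes}. You actually wrote down the relevant fact (``$\cell \in N(\cell)$'') but used it only to conclude $\cell \in \env(S)$, not that all of $N(\cell)$ lies in $\env(S)$. Your subsequent attempts to bring in consistent patterns, clue constraints, and a local case enumeration are going in the wrong direction.

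For the second claim you are closer in spirit, but your count is off and would not close the argument. You write that a corner cell's neighborhood meets $\env(S)$ in at most $3$ cells; this is neither the right quantity nor a correct bound (the corner cell itself is already in the envelope, and several of its neighbors may be as well). What you need to bound is the number of \emph{hidden} cells in $N(\cell)$. If two orthogonal neighbors of $\cell$, say the north and west neighbors, lie outside $\env(S)$, then neither of them is adjacent to any hidden cell; chasing this through their $3\times 3$ neighborhoods shows that eight of the nine cells of $N(\cell)$ are forced to be non-hidden, so $N(\cell)$ contains at most one hidden cell (exactly one, since $\cell \in \env(S)$). Now \Cref{lem_clues_in_envelopes} rules out $\cell$ being a clue, and the first claim rules out $\cell$ being hidden, so $\cell$ must be a flag. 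Your weaker bound of $\le 3$ hidden neighbors would not suffice here: with three hidden neighbors and $\Sc(\cell)=1$ nothing is determined, and you cannot invoke the lemma.
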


\begin{proof}
The first property is immediate by definition of the envelope.
If $\cell$ is a corner, then by definition of $\env(S)$, $N(\cell)$ contains a unique hidden cell. By \Cref{lem_clues_in_envelopes}, $\cell$ is not a clue hence $S(\cell)=\flag$.
\end{proof}

From now on on and until the end of this subsection,  $\Pat$ is an arbitrary \ap{} with $6$ mines or less, and $S$ is its associated ambiguous \gs{}. We start with a very natural lemma.

\begin{lemma}\label{lem_no_six_flags}
The ambiguous \gs{} $S$ contains $5$ flags or less.
\end{lemma}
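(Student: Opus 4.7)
The plan is to argue by contradiction: suppose $S$ contains at least $6$ flags. Since $\Pat$ is consistent with $S$, every flagged cell of $S$ is a mine in $\Pat$, so combining with the hypothesis that $\Pat$ has at most $6$ mines forces $\Pat$ to have exactly $6$ mines, all located at the flagged positions. In particular, no hidden cell of $S$ is a mine in $\Pat$.

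Consequently, for every clue cell $\cell\in S$, the value $S(\cell)$ equals the number of flagged cells in $N(\cell)$, i.e.\ $\Sc(\cell)=0$. By \Cref{lem_clues_in_envelopes}, any clue $\cell\in\env(S)$ must satisfy $\Sc(\cell)>0$, so no clue can lie in the envelope. Combined with \Cref{lem_corner_flag} (which forbids hidden border cells), every border cell of $\env(S)$ must then be a flag.

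It then remains to observe that $\env(S)$ always has at least $8$ border cells. Since $\env(S)$ contains the $3\times 3$ neighborhood of at least one hidden cell, a direct inspection of the extreme (topmost, bottommost, leftmost, rightmost) hidden cells shows that the outer boundary of the envelope comprises at least $8$ cells, with equality attained exactly when $\env(S)$ is a single $3\times 3$ block (i.e.\ when $S$ has a unique hidden cell). Thus at least $8$ flags would be required, contradicting the assumption of only $6$ flags.

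The main obstacle is this last step --- making precise the ``$\geq 8$ border cells'' bound when the envelope is a non-trivial union of $3\times 3$ blocks. I expect a short case analysis on the extreme rows and columns of $\env(S)$ to suffice, since adding any hidden cell can only enlarge the envelope and its outer boundary; the worst case is really the single-hidden-cell configuration already accounted for above.
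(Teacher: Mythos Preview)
Your proposal is correct and follows essentially the same route as the paper: assume $6$ flags, deduce that all mines of $\Pat$ are flagged and hence every clue $\cell$ has $\Sc(\cell)=0$, invoke \Cref{lem_clues_in_envelopes} to exclude clues from $\env(S)$, then use \Cref{lem_corner_flag} to force the entire border of $\env(S)$ to be flags, and conclude from the border having at least $8$ cells. The paper simply asserts the ``border has size at least $8$'' step without further comment, so your caution there is, if anything, more careful than the original.
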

\begin{proof}
Assume the contrary, i.e. that $S$ contains $6$ flags and no "hidden mines" (which means that all hidden cells $\cell\in S$ are such that $\Pat(c)=\empty$). $\env(S)$ cannot contain a clue, otherwise, by \Cref{lem_clues_in_envelopes}, $S$ would contain a hidden mine. Therefore, $\env(S)$ contains only flags and hidden cells. Its border has size at least $8$, and by \Cref{lem_corner_flag} it must be all flags, hence $S$ contains strictly more that $6$ flags, a contradiction.
\end{proof}

We continue by restricting the possibilities for $\Pat$.

\begin{lemma}\label{lem_rectangle}
$\env(S)$ is a rectangle.
\end{lemma}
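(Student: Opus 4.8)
The plan is to exploit \Cref{lem_corner_flag} and \Cref{lem_no_six_flags}: every corner cell of $\env(S)$ is a flag, and there are at most five flags, so $\env(S)$ cannot have six or more corner cells. First note that $\env(S)=\bigcup_{h}N(h)$ with the union over the hidden cells $h$, so $\env(S)$ is a polyomino which is $3\times3$-\emph{thick}: every one of its cells lies in some $3\times3$ square contained in $\env(S)$ (if $c\in\env(S)$ then $c\in N(h)\subseteq\env(S)$ for a hidden $h$, and $N(h)$ is such a square); moreover, by \Cref{lem_corner_flag}, the hidden cells are some of the interior cells of $\env(S)$. Assume for contradiction that $\env(S)$ is not a rectangle.

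If $\env(S)$ is disconnected, each connected component is a non-empty polyomino, hence its outer boundary has at least four convex corners, and the cells sitting at those corners are corner cells of $\env(S)$ (a neighbour of a cell that leaves its component also leaves $\env(S)$, and $3\times3$-thickness prevents a single cell from being incident to two convex corners); thus $\env(S)$ has at least eight corner cells, a contradiction. If $\env(S)$ is connected but has a hole, then $3\times3$-thickness forces $\env(S)$ to be three cells thick on each side of the hole, so its bounding box has side at least $7$ and $|\env(S)|$ is far too large to be realizable with six mines -- a contradiction of the same counting type as in the L-shape case below. If $\env(S)$ is connected and simply connected but not a rectangle, its boundary is a rectilinear polygon with at least one reflex vertex; since for such a polygon the number of convex vertices minus the number of reflex vertices equals four, two or more reflex vertices already give at least six convex corner cells, again a contradiction.

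The remaining case -- and the real obstacle -- is $\env(S)$ connected, simply connected, with exactly one reflex vertex, i.e. $\env(S)$ is an ``L-shape'': a rectangle with a smaller rectangle deleted from a corner. Such a region has exactly five convex corner cells, so the flag bound alone is not enough and one must bring in the hypothesis of at most six mines. Here is the argument I would run: the five convex corner cells are flags, so (as there are at most five flags by \Cref{lem_no_six_flags}) these are all the flags, and the mine assignment $\Pat$ has at most one further mine; by \Cref{lem_clues_in_envelopes} every clue of $\env(S)$ has $\Sc>0$, but $\Sc(\cell)$ is exactly the number of mines of $\Pat$ lying on the hidden neighbours of $\cell$, hence at most one, hence exactly one, so every clue of $\env(S)$ is adjacent to that single extra mine, which is itself a hidden cell. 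Consequently all clues of $\env(S)$ lie in one $3\times3$ neighbourhood and there are at most eight of them, so (since hidden cells are interior cells) $|\env(S)|\le(\text{number of interior cells of }\env(S))+8+5$. A short check shows that among all $3\times3$-thick L-shapes only the $4\times4$ square with one corner cell deleted satisfies this inequality, and that one is in turn ruled out by \Cref{lem_clues_in_envelopes}: its three interior cells must all be hidden in order to cover $\env(S)$, and then one of its clue cells has only a single hidden neighbour. This contradiction finishes the proof.

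The step I expect to be hardest is this last, L-shape, case: the three geometric cases are short and uniform, whereas excluding the L-shape genuinely needs the mine budget together with a small, slightly delicate estimate (equivalently, an explicit inspection of the handful of smallest $3\times3$-thick L-shapes).
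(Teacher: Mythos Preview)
Your overall strategy---corners of $\env(S)$ are flags, hence at most five of them, then dispose of the non-rectangular possibilities---is exactly the paper's. The substantive difference is how you kill the L-shape. You bound the number of border cells by $5+8$ (five corner flags plus at most eight clues, all forced into the single hidden mine's $3\times3$ neighbourhood), then enumerate the $3\times3$-thick L-shapes meeting that bound and eliminate the sole survivor by hand. The paper's argument is shorter and avoids the enumeration: with five corners the non-corner border cells are all clues; pick one such clue on the top row (its existence follows from the envelope being a union of $3\times3$ blocks), and $\Sc>0$ forces a hidden mine on the second row. Doing the same for the bottom row and the two extreme columns, and using that there is only \emph{one} hidden mine, forces that mine to sit simultaneously on the second and second-to-last row and column, so the bounding box of $\env(S)$ is $3\times3$; but then $\env(S)$ is exactly a $3\times3$ square and has four corners, not five. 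This ``pin the mine down'' argument is worth knowing: it replaces your inequality-plus-case-check by a direct geometric contradiction.

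One place your write-up is genuinely thin is the ``hole'' case. Saying the bounding box has side $\ge 7$ and ``$|\env(S)|$ is far too large'' is not a proof: a large bounding box does not by itself contradict anything about six mines. What actually works is the same clue-counting you use later: with a hole the four outer corners are still flags, so there are at most two hidden mines, hence every clue lies in the union of two $3\times3$ neighbourhoods (at most $16$ cells), so $\env(S)$ has at most $5+16$ border cells; but a $3\times3$-thick region with even a one-cell hole has far more border cells than that (the eight cells surrounding the hole are border cells in addition to the outer boundary). Tightening this paragraph would make your argument complete; as written it is a sketch rather than a proof. (To be fair, the paper's proof also passes over this point in silence, implicitly taking ``four corners $\Rightarrow$ rectangle'' for granted.)
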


\begin{proof}
By \Cref{lem_corner_flag,lem_no_six_flags}, $\env(S)$ can only have $4$ or $5$ corners. Let us assume by contradiction that $\env(S)$  has $5$ corners. By \Cref{lem_no_six_flags}, all the other cells in the border of $\env(S)$ cannot be flags and therefore are clues. By definition of the envelope, the top row of $\env(S)$\footnote{by this, we mean the highest row of $S$ that contains cells of $\env(S)$.}  has at least one border cell that is not a corner, hence it is a clue, hence the row below it contains a hidden mine of $\Pat$. Apply the same reasoning to the bottom row of $\env(S)$, its leftmost column and its rightmost column. Now, remember that $S$ has already $5$ flags, hence $\Pat$ has only one hidden mine. This would imply that the hidden mine is at the same time on the second and second-to-last row and column of $\env(S)$, hence $\env(S)$ is contained in a $3\times 3$ box, but it also contains this $3\times 3$ box by definition of an envelope, but that contradicts the fact that it has $5$ corners.
\end{proof}

We can finally prove \Cref{prop_patterns}.

\begin{proof}[Proof of \Cref{prop_patterns}]
We know by \Cref{lem_rectangle} that $\env(S)$ is a rectangle, let $w$ and $h$ be its dimensions. We know that $w,h\geq 3$ by definition of the envelope.

First, assume by contradiction that $w=3$. The top $3$ cells of $\env(S)$ are contained in the neighborhood of only one hidden cell, hence by \Cref{lem_clues_in_envelopes}, they are all flags. Same goes for the $3$ bottom cells of $\env(S)$, hence $\env(S)$ contains at least $6$ flags, a contradiction by \Cref{lem_no_six_flags}. Therefore from now on we can assume $w,h\geq 4$.

We will use the following property, that follows directly from \Cref{lem_clues_in_envelopes}:\\
\textit{If $\env(S)$ contains $j$ clue cells with pairwise disjoint neighborhoods, then $S$ contains at least $j$ hidden mines.}

Now, clearly, since $w,h\geq 4$, if the border of $\env(S)$ does not contain $2$ clue cells with disjoint neighborhoods, it must contain at least $6$ flags\footnote{actually, it would even contain at least $9$ flags.}, a contradiction by \Cref{lem_no_six_flags}. Therefore $S$ contains exactly two hidden mines and  $4$ flags (the corners of $\env(S)$).

If $w\geq 6$, one can find four clue cells of $\env(S)$ with pairwise disjoint neighborhoods (the second second-to-last cells of the top and bottom rows), which would imply the existence of at least $4$ hidden mines by the property above, a contradiction.

Therefore $4\leq w,h\leq 5$. Assume finally by contradiction that $w=5$. For $1\leq i\leq 5$, let $a_i$ (resp. $b_i$) be the $i$-th cell of the top row (resp. second row) of $\env(S)$. The only hidden cells in the neighborhood of $a_2$ are $b_2$ and $b_3$, the only hidden cells in the neighborhood of $a_4$ are $b_4$ and $b_3$, and the only hidden cells in the neighborhood of $a_3$ are $b_2$, $b_3$ and $b_4$. Therefore we have the system of equations
\begin{align*}
&\Sc(a_2)=\mathbf{1}_{b_2 \text{ contains a mine}}+\mathbf{1}_{b_3 \text{ contains a mine}}\\
&\Sc(a_3)=\mathbf{1}_{b_2 \text{ contains a mine}}+\mathbf{1}_{b_3 \text{ contains a mine}}+\mathbf{1}_{b_4 \text{ contains a mine}}\\
&\Sc(a_4)=\mathbf{1}_{b_3 \text{ contains a mine}}+\mathbf{1}_{b_4 \text{ contains a mine}}
\end{align*}
therefore the locations of the mines in the second row would be determined, a contradiction since we have an ambiguous \gs{}.

This shows that $w=h=4$ and from there it is direct to deduce that $S=\Smin$.

\end{proof}

\subsection{Discussion}
The proof of \Cref{prop_patterns} is quite tedious and crucially relies on the assumption that the number of mines is bounded by $6$ (in fact, even \Cref{lem_no_six_flags}, which seems obvious at first sight, is not true anymore when considering patterns with  more mines, see \Cref{fig_8_mines_8_flags}). 
\begin{figure}\center
\includegraphics[width=0.3 \textwidth]{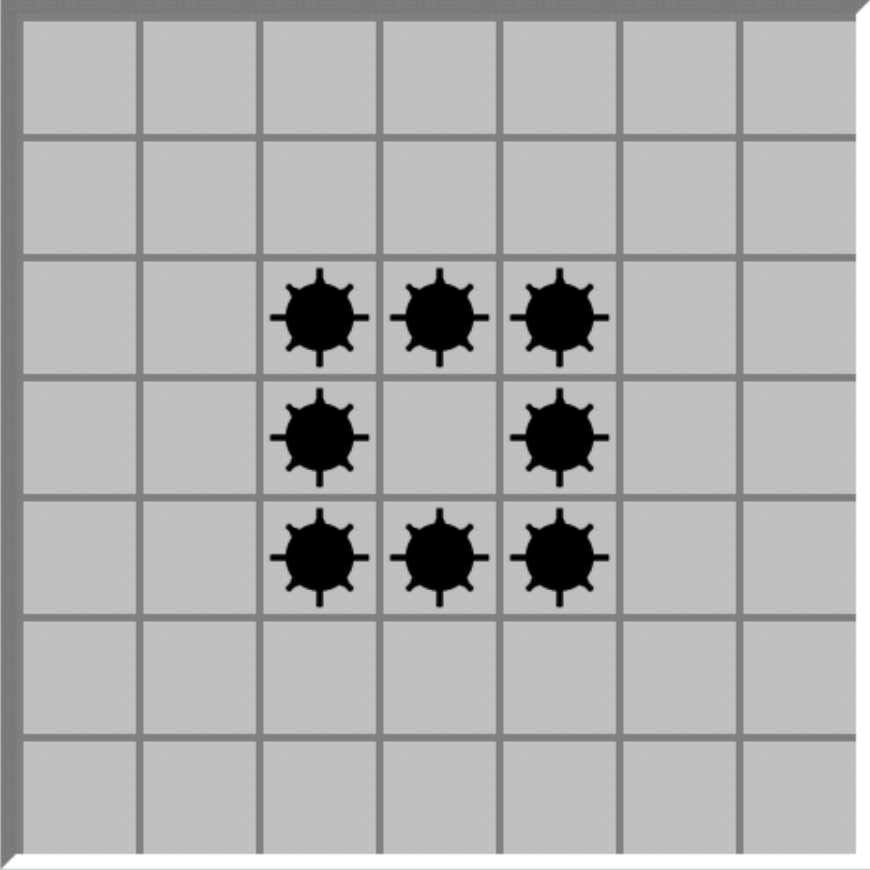}
\includegraphics[width=0.3 \textwidth]{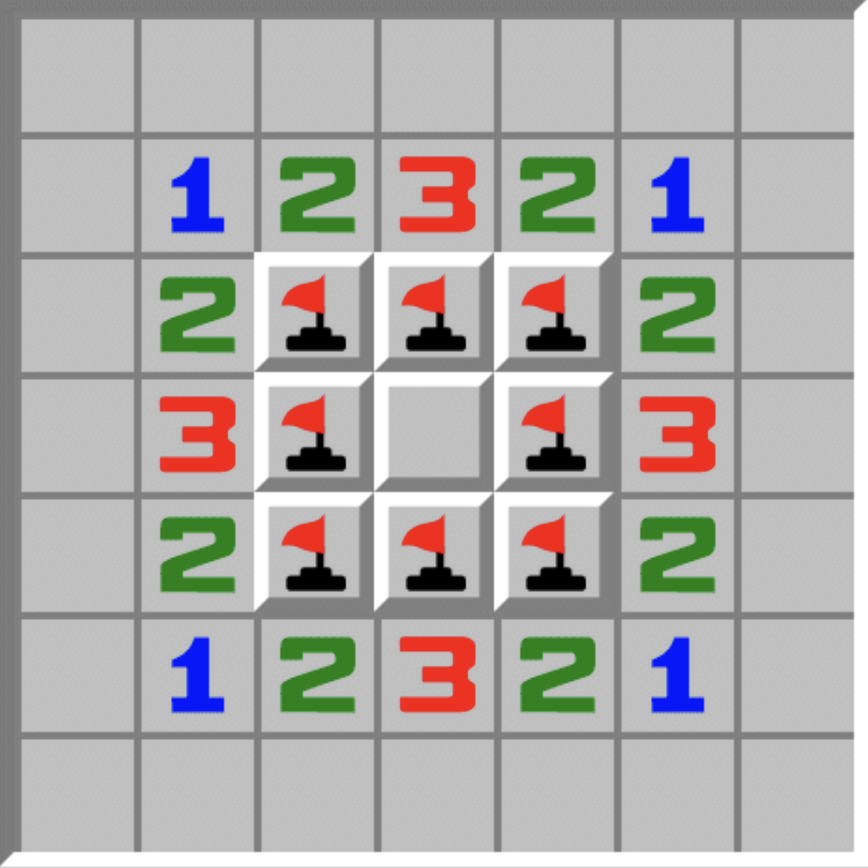}
\caption{An ambiguous pattern with $8$ mines, and its associated ambiguous \gs{} with $8$ flags}\label{fig_8_mines_8_flags}
\end{figure}

In particular, this implies that the proof of \Cref{prop_patterns} does not give us much insight about what ambiguous patterns look like in general.

\begin{open}
Can we characterize/enumerate ambiguous \gs{s}/patterns ?
\end{open}

The hardness of the inference problem rules out a simple characterization, and exact enumeration should not be reasonably expected either, however maybe some asymptotic properties can be proven. We leave this question open, and we finish this section with two more nice ambiguous \gs{s}.

\begin{figure}
\center
\includegraphics[scale=0.37]{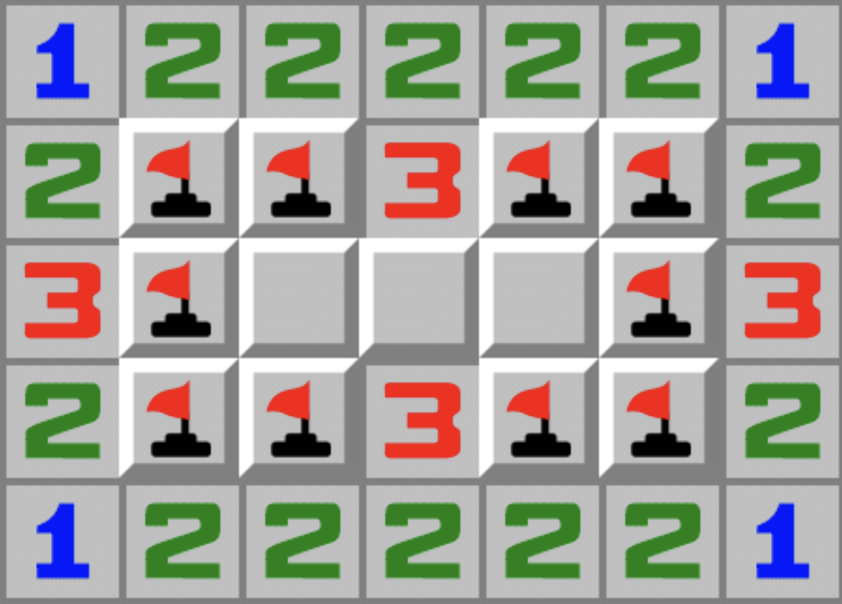}\quad
\includegraphics[scale=0.5]{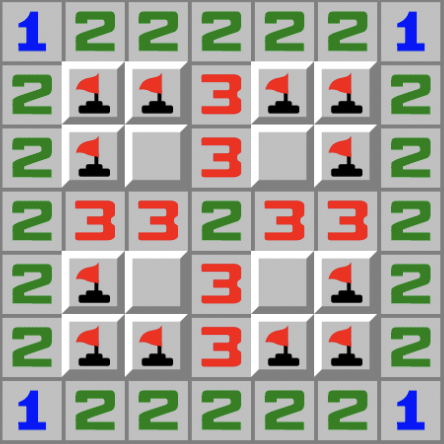}
\caption{Two more nice ambiguous \gs{s}.}
\end{figure}

\section{Proof of the main theorem}\label{sec_proof}
In this section, we prove \Cref{main_thm}.
\subsection{Preliminaries}
We gather here all the technical lemmas. The first one is immediate by definition of the grid $G_n$.

\begin{lemma}\label{lem_no_mine_border}
If $p=o(1)$, there is no mine at distance $100$ or less of the border of the grid in $\Mnp$.
\end{lemma}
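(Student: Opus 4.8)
The plan is a one-line first-moment estimate; the statement is essentially immediate once the relevant quantity is written down. First I would fix the boundary region: let $B_n\subseteq G_n$ be the set of cells whose graph-distance to the border of the $a_N\times b_N$ grid is at most $100$. A cell with coordinates $(i,j)$ lies in $B_n$ precisely when $\min(i-1,\,a_N-i)\le 100$ or $\min(j-1,\,b_N-j)\le 100$, so a direct count gives $|B_n|\le 202\,(a_N+b_N)$; in particular $B_n$ is a vanishing fraction of the grid, as $|B_n|/n\le 202\,(1/a_N+1/b_N)\to 0$.

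Next I would apply the union bound. In $\Mnp$ each cell independently carries a mine with probability $p$, so
\[\P\big(\text{some cell of }B_n\text{ carries a mine}\big)\ \le\ |B_n|\,p\ \le\ 202\,(a_N+b_N)\,p\ =\ o(1).\]
Hence whp no cell within distance $100$ of the border carries a mine, which is exactly the claim.

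There is no genuine obstacle here — the only point to verify is the elementary bound $|B_n|=O(a_N+b_N)$, after which the estimate is automatic — so I would keep the proof to the two lines above. I would, however, record why the generous constant $100$ is convenient, as opposed to the $2$ appearing in the definition of a pattern in \Cref{sec_patterns}: it guarantees that, whp, every mine of $\Mnp$, and therefore every occurrence of an ambiguous pattern, sits well inside the grid, away from all edges and corners, so that the border-free framework of \Cref{sec_patterns} applies with no boundary bookkeeping; in particular the corner-assisted ambiguous configuration alluded to in the footnote of \Cref{prop_patterns} never arises. One may also note that the portion of the statement that matters most later — that no mine lies within distance $100$ of a \emph{corner} — is even simpler and completely unconditional: there are only four corners, whose distance-$100$ neighbourhoods comprise $O(1)$ cells, so the probability that one of them carries a mine is $O(p)=o(1)$ irrespective of the shape of $G_n$.
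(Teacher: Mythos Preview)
Your first-moment approach is the natural one, but the final equality is unjustified and in fact false: from $p=o(1)$ alone you cannot conclude that $202\,(a_N+b_N)\,p=o(1)$, since the paper places no constraint on the aspect ratio of $G_n$ beyond $a_N,b_N\to\infty$. Already for the square grid $a_N=b_N=\sqrt n$ and $p=n^{-1/6}$ (the critical scale of \Cref{main_thm}) one has $(a_N+b_N)\,p\asymp n^{1/3}\to\infty$, and the probability that $B_n$ is mine-free is $(1-p)^{|B_n|}\le\exp\big(-c\,n^{1/3}\big)\to 0$; so whp there \emph{are} mines within distance $100$ of the border, and the lemma as stated cannot hold.

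This is less a defect of your write-up than of the lemma itself: the paper offers no argument (it is declared ``immediate by definition of the grid $G_n$''), and the claim simply fails under the sole hypothesis $p=o(1)$. For the applications in \Cref{sec_solve} one would need either the stronger hypothesis $p=o\big(1/(a_N+b_N)\big)$, or a shape restriction on $G_n$, or a direct treatment of islands that touch the border. Your closing remark about the four corners is correct and is essentially the only part of the assertion that follows unconditionally from $p=o(1)$.
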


We now show that when the mine density is small enough, mines do not concentrate too much in small regions.

\begin{lemma}\label{lem_bad_box}
Let $k=6$ or $7$. If $p<<n^{-1/k}$, then, the expected number of $100\times 100$ subgrids of $\Mnp$, containing $k$ mines or more is $o(1)$.
\end{lemma}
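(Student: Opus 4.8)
The plan is a one-line first-moment (union bound) argument, so I will keep it short. Let $X$ be the number of $100\times 100$ subgrids of $\Mnp$ containing at least $k$ mines. The number of positions of a $100\times 100$ window inside $G_n$ is $(a_N-99)_+(b_N-99)_+\le a_Nb_N=n$, so there are at most $n$ such subgrids, and by linearity of expectation it suffices to bound the probability that one fixed $100\times 100$ subgrid contains $k$ or more mines.

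For a fixed subgrid, which has $10^4$ cells, a union bound over all $k$-element subsets of its cells gives
\[\P(\text{the subgrid contains at least }k\text{ mines})\le \binom{10^4}{k}p^k.\]
Since $k\in\{6,7\}$ is a fixed constant, $C_k:=\binom{10^4}{k}$ is an absolute constant, and therefore $\EE[X]\le n\,C_k\,p^k$. The hypothesis $p\ll n^{-1/k}$ is exactly the statement $p^k=\Landauo(n^{-1})$, whence $\EE[X]\le C_k\,n\,p^k=\Landauo(1)$, which is the claim.

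There is essentially no obstacle here. The only mild point worth noting is that distinct $100\times 100$ windows overlap and hence are not independent; but since we only ever need an upper bound on an expectation, linearity of expectation sidesteps any dependence issue entirely, and no second-moment computation is needed. (As an immediate consequence, Markov's inequality then gives $\P(X\ge 1)\le\EE[X]=\Landauo(1)$, so whp no $100\times 100$ subgrid of $\Mnp$ is this dense — the form in which the lemma will actually be used later.)
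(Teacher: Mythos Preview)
Your proof is correct and is essentially identical to the paper's own argument: bound the number of $100\times 100$ windows by $n$, bound the probability that a fixed window contains at least $k$ mines by $\binom{10000}{k}p^k$, and conclude by linearity of expectation that the expected count is at most $\binom{10000}{k}np^k=o(1)$. Your additional remarks about overlap being irrelevant for a first-moment bound and the Markov consequence are accurate and helpful but go slightly beyond what the paper writes.
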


\begin{proof}
Given a $100\times 100$ subgrid of $\Mnp$, the probability that it contains $k$ mines or more is bounded above by $\binom{10000}{k}p^k$. Hence the expected number of $100\times 100$ subgrids of $\Mnp$, containing $k$ mines or more is bounded by $\binom{10000}{k}np^k=o(1)$.
\end{proof}

Let $S_{n,p}$ be the \gs{} of $\Mnp$ after revealing the cell in the upper left corner, then iteratively revealing any cell that is in the neighborhood of a cell with value $0$\footnote{something that Minesweeper does automatically for the player in the computer version.}.

An \emph{island} of $S_{n,p}$ is a connected component of cells such that $S_{n,p}(\cell)\neq 0$ (see \Cref{fig_island}). In other words, each island is independent of the others, and solving $\Mnp$ is equivalent to solving each island separately. 

\begin{figure}
\center
\includegraphics[scale=1]{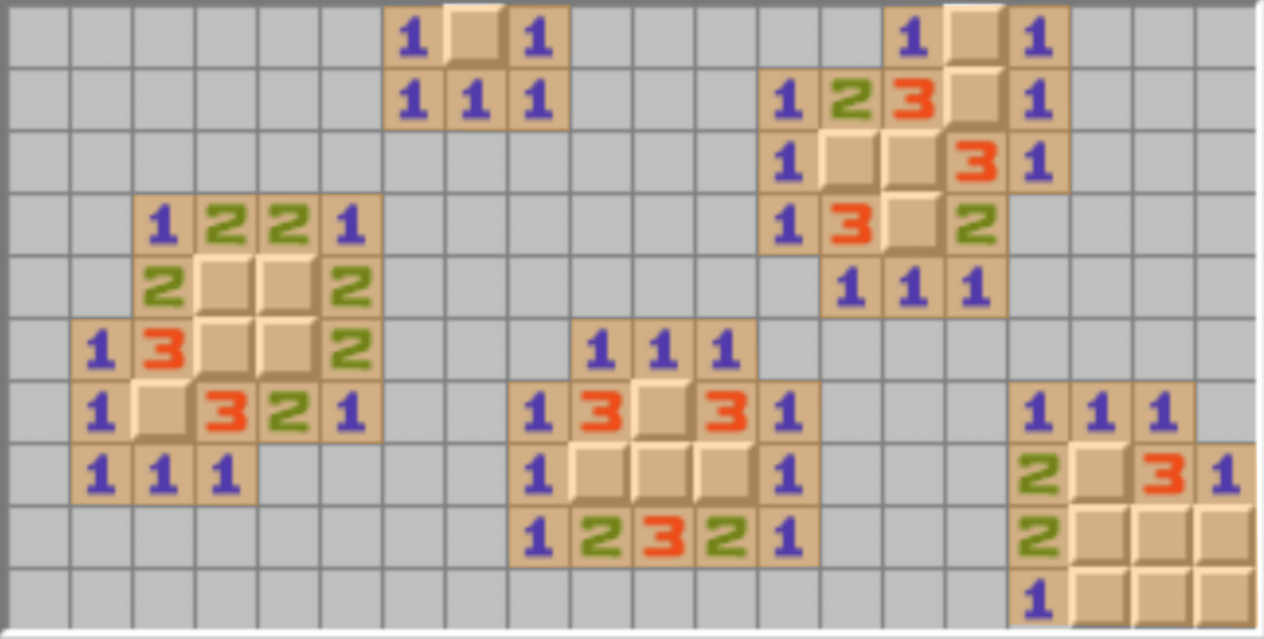}
\caption{A grid state and its islands in orange.}\label{fig_island}
\end{figure}

\begin{lemma}\label{lem_islands}
Let $k=6$ or $7$. If $p<<n^{-1/k}$, no island of $S_{n,p}$ contains more than $k-1$ mines, and every island fits in a $100\times 100$ box.
\end{lemma}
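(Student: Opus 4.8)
The plan is to combine Lemma \ref{lem_bad_box} with a connectivity/geometry argument about how islands relate to the mines that generate them. The key observation is that an island of $S_{n,p}$ is a connected set of nonzero-clue cells, and every such cell is nonzero precisely because it has a mine in its neighborhood. So each island is "anchored" to the set of mines lying within distance $1$ of it; I want to show this mine set is small and that it forces the island to be geometrically small.

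First I would set up the correspondence between islands and mines. Given an island $I$, let $M_I$ be the set of mines at distance $\le 1$ from $I$ (equivalently, mines $\cell$ with some neighbor of $\cell$ lying in $I$). Every cell of $I$ is a clue cell, so by consistency it sees at least one mine in its neighborhood; hence every cell of $I$ is within distance $1$ of $M_I$, i.e. $I \subseteq \bigcup_{m \in M_I} N(m)$. Moreover $M_I$ is itself "connected at scale $2$": I'd argue that if $M_I$ split into two pieces more than distance, say, $2$ apart in a suitable sense, then the corresponding parts of $I$ could not be connected through nonzero cells without passing through a cell that sees no mine (value $0$), which would not belong to $I$. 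Making this precise requires a short lemma: the cells of $\bigcup_{m\in M_I} N(m)$ that have a mine neighbor form a connected set only if the mines are appropriately clustered — concretely, two mines whose neighborhoods are "linked" through nonzero cells must be at bounded distance.

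Next, the counting step. Suppose for contradiction that some island $I$ contains at least $k$ mines (note $M_I$ contains all mines \emph{in} $I$, plus possibly more adjacent to $I$; in particular $|M_I| \ge k$). By the clustering established above, $M_I$ fits inside a box of bounded size — I claim a $100\times 100$ box suffices, since the mutual-distance bound is an absolute constant (coming from the radius-$1$ neighborhoods and the fact that between two linked mines there can be only a constant-length chain of nonzero cells). Then that $100\times100$ box contains $\ge k$ mines, so by Lemma \ref{lem_bad_box} the expected number of such boxes is $o(1)$; hence whp no such box exists, hence whp no island has $\ge k$ mines. Finally, once we know $|M_I| \le k-1 \le 6$ and the mines of $M_I$ are clustered within an absolute constant diameter, $I \subseteq \bigcup_{m \in M_I} N(m)$ is contained in a box of absolute constant size, which is at most $100 \times 100$ (and in fact much smaller); this gives the second conclusion, and also retroactively justifies using $100\times 100$ boxes in Lemma \ref{lem_bad_box}. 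A whp statement about "no island contains more than $k-1$ mines" follows since the bad event is contained in the bad-box event of Lemma \ref{lem_bad_box}; combined with Lemma \ref{lem_no_mine_border} (mines avoid the border) there is no boundary issue.

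The main obstacle is the geometric clustering claim: showing that the mines adjacent to a single island cannot be spread out, i.e. that an island's "anchor set" $M_I$ has bounded diameter whenever it is small. The subtlety is that islands are defined via the flood-fill from the corner, so an island is a connected component of $\{S_{n,p} \ne 0\}$, and one must rule out a long thin island snaking between far-apart mines while staying nonzero throughout — this is where one uses that a cell is nonzero only if it touches a mine, so a long nonzero chain forces a dense chain of mines alongside it, and with few mines such a chain cannot be long. I would isolate this as a separate deterministic sub-lemma (a pure statement about clue cells and neighborhoods, no probability), prove it by an elementary covering argument, and then the probabilistic part is just Lemma \ref{lem_bad_box}.
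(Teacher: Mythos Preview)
Your plan is essentially the paper's proof. The paper makes your ``connected at bounded scale'' claim precise by defining an adjacency graph on the mines of an island with edges whenever $d(\cell,\cell')\le 3$, asserts this graph is connected, and then --- crucially --- extracts a connected subgraph on exactly $k$ vertices, so that its graph diameter is $\le k-1\le 6$ and hence its grid diameter is $\le 18$; this is what places $k$ mines in a single $100\times100$ box and triggers \Cref{lem_bad_box}. Your write-up slides from ``$M_I$ is connected at scale $O(1)$'' to ``$M_I$ fits in a bounded box'', which is not the same thing when $|M_I|$ is not a priori bounded; you need the pass-to-$k$-vertices step.

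One genuine inaccuracy to fix: islands are \emph{not} made of clue cells only. An island is a connected component of $\{c:S_{n,p}(c)\neq 0\}$, and hidden cells (including mines, and also hidden cells with \emph{no} mine neighbour, trapped behind a wall of nonzero cells) all have $S_{n,p}=\hidden\neq 0$ and hence belong to islands. So your containment $I\subseteq\bigcup_{m\in M_I}N(m)$ can fail --- e.g.\ four mines at $(0,2),(2,0),(2,4),(4,2)$ trap the mine-free cell $(2,2)$ inside the island. This does not harm the mine-count half of the argument (mines themselves are in $U=\bigcup_m N(m)$, and one can still run the adjacency-graph argument on them), but for the ``island fits in a $100\times100$ box'' conclusion you should argue instead that the outer boundary cells of $I$ lie in $U$ (they are adjacent to $Z_0$, hence not in $Z$), so the bounding box of $I$ equals the bounding box of $U\cap I$, which is controlled by $M_I$.
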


\begin{proof}
For any island $I\subset S_{n,p}$, we can build an "adjacency graph"
$A$ where the vertices are labeled by the mines i.e $V(A)=\{\cell\in I|\Mnp(\cell)=\mine\}$ and $\cell\sim\cell'$ iff $d(\cell,\cell')\leq 3$. It is easily checked that for any island, its adjacency graph is connected.

Now, let $I\subset S_{n,p}$ be an island containing at least $k$ mines and $A$ its adjacency graph. 
Since $A$ is connected, one can find $k$ of its vertices such that the induced subgraph is still connected. Let $A'$ be this graph, it has diameter $\leq 6$, hence the distance (in the grid) between two cells of $V(A')$ is at most $18$. Therefore these $k$ cells fit in a $100\times 100$ subgrid of $\Mnp$. Which means that if $S_{n,p}$ contains an island with $k$ mines or more, $\Mnp$ contains a $100\times 100$ subgrid with at least $k$ mines. By \Cref{lem_bad_box}, this happens with probability $o(1)$, which concludes the proof.
\end{proof}

Finally, we show that when the mine density is large enough, ambiguous patterns emerge.

For any \ma{} $M^*$ on $G_n$, define $M^*_p$ in the following way: for every cell $\cell\in  G_n$, if $M^*(\cell)=\mine$ then $M^*_p(\cell)=\mine$, otherwise $M^*_p(\cell)=\mine$ with probability $p$.

\begin{lemma}\label{lem_emergence_patterns}

If $M^*$ contains less than $n^{0.9}$ mines and $\cut<<p<1-\eps$ for some $\eps>0$, then there exists $C>0$ depending only on $\eps$ such that for $n$ large enough, $M^*_p$ contains more than $Cnp^6$ occurrences of $\Pat_1$ and $\Pat_2$ with probability greater than $ 1-\exp\left(-\frac{Cnp^6}{4}\right)$
\end{lemma}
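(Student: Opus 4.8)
The plan is to use a standard second-moment / concentration argument on the number of occurrences of $\Pat_1$ and $\Pat_2$ in $M^*_p$. The first step is to isolate a large family of "potential slots" for these patterns. Since $M^*$ contains at most $n^{0.9}$ mines, a linear (in $n$) number of $9\times 9$ sub-boxes of $G_n$ (say $cn$ of them for a constant $c>0$, after discarding the at most $100$-thick border strip allowed by \Cref{lem_no_mine_border} and discarding the $O(n^{0.9})$ boxes that meet an existing mine of $M^*$) are completely free of mines in $M^*$ and are pairwise disjoint. Call these the \emph{good boxes}; there are $\Theta(n)$ of them. In each good box, the event that $M^*_p$ restricted to that box is exactly $\Pat_1$ (or $\Pat_2$) depends only on the $81$ fresh coin flips inside the box: it has probability exactly $p^6(1-p)^{75}$ for each of $\Pat_1,\Pat_2$, and since $p<1-\eps$ we have $(1-p)^{75}\geq \eps^{75}$, so each good box independently produces an occurrence with probability at least $2\eps^{75}p^6 =: c_0 p^6$.

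Next I would let $X$ be the number of good boxes that contain an occurrence of $\Pat_1$ or $\Pat_2$. By the previous paragraph, $X$ stochastically dominates a sum of $\Theta(n)$ independent Bernoulli variables each with success probability at least $c_0 p^6$, so $\mathbb{E}[X]\geq c_1 n p^6$ for a constant $c_1>0$. A Chernoff bound for the lower tail of such a sum gives
\[
\P\!\left(X\leq \tfrac12\,c_1 n p^6\right)\leq \exp\!\left(-\tfrac{c_1 n p^6}{8}\right).
\]
Here the hypothesis $p\gg\cut$ is used only to guarantee that $np^6\to\infty$, so that this bound is genuinely $o(1)$ and the claimed occurrence count $Cnp^6$ (with, say, $C=c_1/2$) actually tends to infinity; it plays no other role. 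Since distinct good boxes are disjoint, distinct occurrences counted by $X$ are genuinely distinct occurrences of $\Pat_1,\Pat_2$ in $M^*_p$, so $M^*_p$ contains at least $\tfrac12 c_1 n p^6$ occurrences with probability at least $1-\exp(-\tfrac18 c_1 n p^6)$. Adjusting $C$ to also absorb the constant in the exponent (replacing $c_1/8$ by $C/4$ costs only a further halving of $C$) yields the statement, with $C$ depending only on $\eps$ through $c_0=2\eps^{75}$ and the absolute packing constant $c$.

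The only mild subtlety — and the step I would be most careful about — is the bookkeeping that turns "$M^*$ has few mines" into "a linear number of pairwise disjoint $9\times 9$ boxes are mine-free in $M^*$ and far from the border": one tiles $G_n$ by $\Theta(n)$ disjoint $9\times 9$ boxes, throws away the $O(1)$ border layers' worth of boxes and the at most $n^{0.9}=o(n)$ boxes containing an $M^*$-mine, and what remains is still $\Theta(n)$ disjoint good boxes. Everything else is the routine Chernoff estimate above; no second-moment computation is actually needed because disjointness already makes the relevant indicators independent. I would also remark that the choice of $9\times 9$ (rather than exactly fitting $\Smin$'s $4\times4$ envelope) is just to have room for the pattern's mandatory empty border rows/columns, consistent with the definition of \emph{pattern} in \Cref{sec_patterns}.
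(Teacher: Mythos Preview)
Your proposal is correct and follows essentially the same approach as the paper: tile $G_n$ by disjoint boxes, discard the $o(n)$ boxes meeting an $M^*$-mine (and those near the boundary), observe that in each remaining box the event of seeing $\Pat_1$ or $\Pat_2$ has probability at least $2\eps^{\text{const}}p^6$ and that these events are independent, then apply a Chernoff lower-tail bound. The only cosmetic differences are that the paper uses $8\times 8$ boxes (the exact size of $\Pat_1,\Pat_2$, giving $(1-p)^{58}$ and $C=\eps^{58}$) rather than your $9\times 9$, and that your invocation of \Cref{lem_no_mine_border} is unnecessary here---that lemma concerns $M_{n,p}$, whereas discarding border boxes is the purely deterministic count you give in your final paragraph.
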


\begin{proof}
Set $C=\eps^{58}$. In $M^*$, for $n$ large enough one can find at least $n/100$ pairwise disjoint subgrids of dimensions $8\times 8$ not containing any mines (tile $G_n$ with $n/64-o(n)$ boxes of dimensions $8\times 8$ and throw away those who contain a mine of $M^*$). In each of those, the probability that the mines in $\Mnp$ form the either the pattern $\Pat_1$ or the pattern $\Pat_2$ is $2p^6(1-p)^{58}\geq 2Cp^6$. Since what happens in each of the $8\times 8$ subgrids is independent, we can conclude with Chernoff bounds.
\end{proof}

The statement of \Cref{lem_emergence_patterns} can be confusing, one should first think of it with $M^*=\empty^{G_n}$ and therefore $M^*_p=\Mnp$, which will be enough for the proof of \Cref{main_thm}. We will only use \Cref{lem_emergence_patterns} in full generality in \Cref{prop_monotonicity}.

\subsection{Above criticality}
Here, $p>>n^{-1/6}$. If $p=1-o(1)$ then any algorithm's first choice for a cell hits a mine whp and loses. Hence in what follows we can assume there exists $\eps>0$ such that $p<1-\eps$. Since $np^6\to \infty$, by~\Cref{lem_emergence_patterns} (for $M^*=\empty^{G_n}$) whp the number of occurrences of both $\Pat_1$ and $\Pat_2$ in $\Mnp$ tends to infinity, and we conclude by \Cref{lem_k_ambiguous_patterns} that
\[\max_{\text{algorithms }\A}\P(\A\text{ solves }\Mnp)=o(1).\]

\subsection{Below criticality}\label{sec_solve}

Here we take $p=o(\cut)$. First, pick any arbitrary (valid) inference algorithm, i.e. an algorithm that selects a safe cell if it finds one, and gives up otherwise. We describe our algorithm that solves $\Mnp$ whp:

\begin{enumerate}
\item Reveal the upper left corner cell;
\item As long as there exists a hidden cell in the neighborhood of a revealed cell with value $0$, pick one arbitrarily and reveal it;
\item Record where the islands are in the grid;
\item For each island, if it fits in a $100\times 100$ box, apply the inference algorithm to it, otherwise, give up.
\end{enumerate}

By \Cref{lem_islands,lem_no_mine_border,prop_patterns} with high probability no step will fail, i.e. the first cell revealed will not be a mine, all islands will be small enough and away from the border and the inference algorithm will solve each of them.

Speaking of runtime, step 2 can be performed in linear time if one keeps a list of cells to reveal next: each time a cell with label $0$ is revealed, add its hidden neighbors to the "to reveal" list (redundancies do not matter since every cell can be added at most $8$ times to the list). Step 3 can also be performed in linear time by scanning the grid linearly, and step 4 takes constant time per island (no matter how bad the inference algorithm, it is applied to a bounded size grid). Therefore our algorithm runs in linear time.

\subsection{At criticality}

Take $p\sim c\cut$. We sketch the proof of this section and leave the details as an exercise to the reader. The expected number of occurrences of $\Pat_1$ (resp. $\Pat_2$) is $c^6+o(1)$. By \Cref{lem_bad_box} for $k=7$ and  the Stein-Chen method \cite{SteinChen}, we can show that the number of occurrences of $\Pat_1$ and $\Pat_2$ converge to independent Poisson laws of parameter $c^6$. By \Cref{lem_islands} for $k=7$, whp no island of $S_{n,p}$ contains more than $6$ mines. Therefore, with probability $\exp(-2c^6)+o(1)$, no islands of $S_{n,p}$ contain an ambiguous pattern, hence the same algorithm as in the previous section solves $\Mnp$. But with probability $1-\exp(-2c^6)+o(1)$ there is at least an ambiguous pattern and by \Cref{lem_k_ambiguous_patterns}
\[\max_\A \P(\A \text{ solves }\Mnp)\leq 1-\frac{1-\exp(-2c^6)}{2}+o(1).\]

\section{The random process}\label{sec_process}

We define the process $M_n^t$ for $0\leq t\leq n$ in the following way: $M_n^0=\empty^{G_n}$, and for each $t$, $M_n^{t+1}$ is built out of $M_n^t$ by picking uniformly at random one of its empty cells and adding a mine in it. All steps are pairwise independent. We let $\tau$ be the smallest $t$ such that $M_n^t$ contains either $\Pat_1$ or $\Pat_2$ (and we set $\tau=\infty$ if such a $t$ does not exist).

\subsection{Hitting time property}

The unsolvability of $M_n^t$ arises with the first occurrence of $\Pat_1$ or $\Pat_2$.

\begin{theorem}\label{thm_hitting_time}
Whp the algorithm of \Cref{sec_solve} solves $M_n^t$ for all $0\leq t<\tau$.
\end{theorem}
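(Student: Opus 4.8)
The goal is to show that whp the algorithm of \Cref{sec_solve} solves $M_n^t$ \emph{simultaneously} for every $0\le t<\tau$, so the plan is to track the only two ways that algorithm can fail and show neither occurs before time $\tau$. Recall the algorithm: reveal the corner, flood-fill the zeros, then for each island check whether it fits in a $100\times 100$ box and, if so, run an inference algorithm. By \Cref{lem_no_mine_border} (applied with $p=n^{-1}\cdot t \le 1 = o(1)$ in the appropriate sense, or directly: at time $\tau$ there are few enough mines) the corner reveal succeeds whp, so the two real failure modes are: (i) some island does not fit in a $100\times 100$ box, and (ii) some island fits but is \emph{ambiguous}, i.e. the inference algorithm gives up. The strategy is to bound both over the whole range $0\le t<\tau$ at once.

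\textbf{Step 1: few mines before $\tau$.} First I would argue that whp $\tau \le n^{0.9}$ is \emph{not} needed — rather, I need that whp $M_n^{t}$ for $t$ just below $\tau$ has at most (say) $n^{0.9}$ mines, equivalently $\tau \le n^{0.9}$ whp. This follows because the process $M_n^t$ at time $t=\lfloor p n\rfloor$ is contiguous with (indeed, a coupling / conditioning of) $\Mnp$, and by \Cref{lem_emergence_patterns} with $M^*=\empty^{G_n}$ and $p = \cut\cdot\omega(1)$ tending to $0$ slowly, an occurrence of $\Pat_1$ or $\Pat_2$ appears whp once $np^6\to\infty$, i.e. by the time $t \approx n^{5/6+o(1)} \ll n^{0.9}$. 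So whp $\tau = n^{5/6+o(1)}$ and in particular $M_n^{\tau-1}$ has $O(n^{0.9})$ mines. This also lets me invoke \Cref{lem_no_mine_border}-type control uniformly.

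\textbf{Step 2: ruling out large islands before $\tau$, uniformly in $t$.} Islands only grow as $t$ increases, so it suffices to control the final grid state $S_n^{\tau-1}$. Reusing the adjacency-graph argument in the proof of \Cref{lem_islands}: an island with $\ge 7$ mines forces a $100\times 100$ subgrid with $\ge 7$ mines. I would bound the expected number of $100\times100$ subgrids with $\ge 7$ mines in $M_n^{\tau-1}$. Since $\tau-1$ is whp of order $n^{5/6+o(1)}$, conditioning on this the mine configuration is a uniform random set of $m=n^{5/6+o(1)}$ cells, so the probability a fixed $100\times100$ box contains $\ge 7$ of them is $O\!\big(\binom{10^4}{7}(m/n)^7\big)=O(n^{-7/6+o(1)})$; multiplying by $n$ boxes gives $o(1)$. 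Hence whp every island of $S_n^{\tau-1}$ — and therefore of every $S_n^t$ with $t<\tau$ — fits in a $100\times100$ box and carries at most $6$ mines. Alternatively, and more cleanly, I would phrase this through a union bound over $t\le n^{0.9}$ combined with \Cref{lem_bad_box} for $k=7$ applied at the relevant density $p \asymp \tau/n$; the slack $n^{-7/6}$ vs.\ the $n^{0.9}$ steps is comfortable once one notes the bad-box event is monotone in $t$, so no union bound over $t$ is actually required — monotonicity reduces everything to $t=\tau-1$.

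\textbf{Step 3: no ambiguous island before $\tau$.} This is where the hitting-time phenomenon is used. By \Cref{prop_patterns}, any ambiguous \gs{} with at most $6$ mines must contain $\Pat_1$ or $\Pat_2$ as a sub-pattern. By Step 2, whp every island of $S_n^t$ for $t<\tau$ has at most $6$ mines; and by the definition of $\tau$, for $t<\tau$ the mine assignment $M_n^t$ contains \emph{no} occurrence of $\Pat_1$ or $\Pat_2$, hence no island can be ambiguous. Therefore the inference algorithm solves every island, and the algorithm of \Cref{sec_solve} solves $M_n^t$. Combining Steps 1–3: on the whp event of Step 2 (no $100\times100$ box with $\ge 7$ mines by time $\tau-1$) intersected with the whp event that the corner reveal never hits a mine before $\tau$ (which follows from Step 1 plus \Cref{lem_no_mine_border}), the algorithm succeeds for all $0\le t<\tau$ simultaneously, which is the claim.

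\textbf{Main obstacle.} The only subtle point is making Step 1 rigorous — transferring between the fixed-$p$ model $\Mnp$ (in which \Cref{lem_emergence_patterns} and \Cref{lem_bad_box} are stated) and the process $M_n^t$ with a fixed number of mines, and doing so uniformly enough that "whp simultaneously over all $t<\tau$" is legitimate rather than "whp for each fixed $t$." The clean fix is to observe that both failure events (big box, big island) are \emph{monotone increasing} in $t$, so it is enough to evaluate them at the single random time $\tau-1$, and then bound $\tau-1$ from above by $n^{0.9}$ whp via the Binomial-to-uniform coupling between $\Mnp$ at $p=n^{-1/6}\log n$ and $M_n^t$. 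Everything else is a direct reuse of \Cref{lem_islands,prop_patterns}.
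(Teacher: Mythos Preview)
Your approach is the same as the paper's: reduce the claim to showing that whp no $100\times 100$ box contains $\ge 7$ mines at any $t<\tau$ (your Step~2), and then invoke \Cref{prop_patterns} together with the island argument of \Cref{lem_islands} to conclude that no island before time $\tau$ is ambiguous (your Step~3). The paper packages your Step~2 as a separate statement (\Cref{prop_tau_subgrid}) and proves it via a single \emph{deterministic} cutoff $\kappa_n=n^{71/84}$, chosen so that $n^{5/6}\ll\kappa_n\ll n^{6/7}$: at time $\kappa_n$ one shows simultaneously that whp $\Pat_1$ has already appeared (hence $\tau\le\kappa_n$) and that whp no $100\times100$ box has $7$ mines yet; monotonicity of the bad-box event then handles all $t<\tau$ at once.

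Two points in your execution deserve correction. First, your cutoff $n^{0.9}$ is too large for the bad-box side: at $t=n^{0.9}$ the density is $n^{-0.1}$, so the expected number of $7$-mine $100\times100$ boxes is of order $n\cdot n^{-0.7}=n^{0.3}\to\infty$, and the bound fails. The cutoff must lie strictly below $n^{6/7}$ for the $k=7$ calculation to go through, which is exactly why the paper picks $n^{71/84}$. Second, the sentence ``conditioning on $\tau-1$, the mine configuration is a uniform random set of $m=n^{5/6+o(1)}$ cells'' is not correct as stated: conditioning on $\tau$ biases the configuration (it forces the absence of $\Pat_1,\Pat_2$ up to $\tau-1$ and an occurrence at $\tau$), so $M_n^{\tau-1}$ is not uniform. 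The clean way around this --- which you gesture at in your ``main obstacle'' paragraph --- is precisely the paper's move: work at a fixed deterministic time $\kappa_n$, show $\{\tau\le\kappa_n\}$ holds whp, and use monotonicity. Once you replace $n^{0.9}$ by any exponent in $(5/6,6/7)$, your outline matches the paper's proof.
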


The idea is that whp, the first ambiguous pattern to appear will be $\Pat_1$ or $\Pat_2$. 
By the same reasoning as in  \Cref{lem_islands,sec_solve}, \Cref{thm_hitting_time} is a direct consequence of the following result.

\begin{proposition}\label{prop_tau_subgrid}
Whp, for all $t<\tau$, no $100\times 100$ subgrid of $M_n^t$ contains more than $6$ mines.
\end{proposition}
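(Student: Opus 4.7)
The plan is to sandwich the process $M_n^t$ between two Bernoulli models $M_{n,p}$ at a well-chosen deterministic time $T_n$, and exploit the gap between the exponents $5/6$ (for emergence of ambiguous patterns) and $6/7$ (for concentration of mines in $100\times100$ boxes).

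First I would pick a threshold $T_n$ with $n^{5/6}\ll T_n\ll n^{6/7}$ (for instance $T_n=n^{5/6+1/100}$). Such a $T_n$ exists because $5/6<6/7$. Then I would define the event
\begin{equation*}
A=\bigl\{M_n^{T_n}\text{ has no }100\times 100\text{ subgrid with }\geq 7\text{ mines}\bigr\},\qquad B=\{\tau\leq T_n\},
\end{equation*}
and show that $\P(A),\P(B)\to 1$. The key observation is that on $A\cap B$, the proposition holds: the number of mines in any fixed box is monotone in $t$, so if $M_n^{T_n}$ avoids bad boxes, then so does $M_n^t$ for every $t\leq T_n$; and $B$ guarantees that every $t<\tau$ satisfies $t<T_n$.

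The coupling is the standard one: placing mines one by one in uniformly random order gives the sequence $(M_n^t)_t$, and $M_{n,p}$ has the same distribution as $M_n^K$ where $K\sim\text{Binomial}(n,p)$ is independent. For event $B$, I would take $p_0=T_n/(2n)$, which still satisfies $p_0\gg n^{-1/6}$. By \Cref{lem_emergence_patterns} applied to $M^*=\varnothing^{G_n}$, $M_{n,p_0}$ contains an occurrence of $\Pat_1$ or $\Pat_2$ whp; by Chernoff, $K\leq T_n$ whp; so whp there is some $k\leq T_n$ with $M_n^k$ containing $\Pat_1$ or $\Pat_2$, hence $\tau\leq T_n$. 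For event $A$, I would take $p_1=2T_n/n$, which still satisfies $p_1\ll n^{-1/7}$. By \Cref{lem_bad_box} with $k=7$, $M_{n,p_1}$ has no bad $100\times 100$ subgrid whp; Chernoff gives $K'\geq T_n$ whp; and since $M_n^{T_n}\subseteq M_n^{K'}$ (as mine sets), the absence of bad boxes transfers to $M_n^{T_n}$.

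The main obstacle is not any single step but a small bookkeeping subtlety: unlike the number of mines in a box, the occurrence of a pattern is not monotone under adding mines, so I cannot directly assert that $M_n^{T_n}$ contains $\Pat_1$ or $\Pat_2$ from the coupling; this is why I phrase $B$ as $\tau\leq T_n$ and extract it from the intermediate value $M_n^K$ rather than the endpoint. Everything else is concentration and the monotonicity of mine counts, so the argument is essentially a one-paragraph deduction once $T_n$ is chosen in the window between the two exponents.
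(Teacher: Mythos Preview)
Your proof is correct and follows essentially the same strategy as the paper: pick a threshold in the window $n^{5/6}\ll T_n\ll n^{6/7}$ (the paper takes $\kappa_n=n^{71/84}$), show that at this time both an occurrence of $\Pat_1$ or $\Pat_2$ has already appeared and no $100\times100$ box has $7$ mines, then use monotonicity of mine counts. The only difference is in the transfer between the process and the Bernoulli model: the paper uses the conditioning identity $M_n^t\stackrel{d}{=}(M_{n,p}\mid |M_{n,p}|=t)$ together with Stirling for the denominator, whereas you use the coupling $M_{n,p}\stackrel{d}{=}M_n^K$ with $K\sim\mathrm{Bin}(n,p)$ and Chernoff---a minor technical variation that is arguably a bit cleaner.
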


This property is clearly monotone, hence if we set $\cutoff=n^{71/84}$ (the important thing being that $5/6<71/84<6/7$), it suffices to show the two following properties to imply \Cref{prop_tau_subgrid}.
\begin{lemma}\label{lem_tau_vs_bad_box}
Whp, $M_n^{\cutoff}$ contains an occurrence of $\Pat_1$ and no $100\times 100$ subgrid of $M_n^{\cutoff}$ contains more than $6$ mines. 
\end{lemma}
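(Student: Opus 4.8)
The plan is to prove the two claims separately, exploiting the fact that at time $\cutoff=n^{71/84}$ the mine density is $p_{\text{eff}}\sim n^{71/84-1}=n^{-13/84}$, which sits strictly between $n^{-1/6}=n^{-14/84}$ and $n^{-1/7}=n^{-12/84}$. First I would transfer from the fixed-number-of-mines model $M_n^{\cutoff}$ to the independent model $M_{n,p}$ with $p=n^{-13/84+o(1)}$: since we only need a one-sided ``whp'' statement about monotone-ish events, the standard coupling/concentration argument (the number of mines in $M_{n,p}$ is tightly concentrated around $np$, and one can sandwich $M_n^{\cutoff}$ between $M_{n,p_-}$ and $M_{n,p_+}$ for $p_\pm$ slightly below/above $\cutoff/n$) reduces everything to the Bernoulli model. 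This is routine and I would not belabor it.

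For the second claim — no $100\times100$ subgrid contains more than $6$ mines — I would invoke \Cref{lem_bad_box} with $k=7$: since $p=n^{-13/84}\ll n^{-12/84}=n^{-1/7}$, the expected number of $100\times100$ subgrids with $7$ or more mines is $o(1)$, so by Markov's inequality whp there are none, and a fortiori none with more than $6$. For the first claim — that $M_n^{\cutoff}$ contains an occurrence of $\Pat_1$ — I would apply \Cref{lem_emergence_patterns} with $M^*=\empty^{G_n}$ (so $M^*_p=M_{n,p}$), noting that $M^*$ has $0<n^{0.9}$ mines and that $\cut=n^{-14/84}\ll p=n^{-13/84}<1-\eps$. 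The lemma then gives that $M_{n,p}$ contains more than $Cnp^6$ occurrences of $\Pat_1$ and $\Pat_2$ with probability at least $1-\exp(-Cnp^6/4)$; since $np^6=n^{1-78/84}=n^{6/84}=n^{1/14}\to\infty$, this probability tends to $1$, so whp there is at least one occurrence of $\Pat_1$ (by symmetry between $\Pat_1$ and $\Pat_2$ in $M_{n,p}$, or simply by re-running the argument counting only $\Pat_1$'s, which have density $p^6(1-p)^{58}\geq Cp^6$). Finally I would combine the two whp statements by a union bound.

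The only genuinely delicate point is the transfer between the $\cutoff$-many-mines process model and the Bernoulli model, and in particular making sure the exponent window $5/6<71/84<6/7$ is used correctly: one needs $p_{\text{eff}}\ll n^{-1/7}$ for the bad-box bound and $p_{\text{eff}}\gg n^{-1/6}$ for pattern emergence, and the choice $71/84$ does exactly this with room to spare, so the concentration slack in the coupling (a $n^{o(1)}$ or even $n^{-\delta}$ factor) is harmless. The rest is a direct citation of \Cref{lem_bad_box} and \Cref{lem_emergence_patterns}.
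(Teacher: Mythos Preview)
Your plan is sound and uses the same two key lemmas as the paper, but the transfer step you call ``routine'' hides a genuine issue. The event ``contains an occurrence of $\Pat_1$'' is \emph{not} monotone: adding a mine to one of the $58$ prescribed empty cells destroys the pattern. Hence even if you couple so that $M_{n,p_-}\subset M_n^{\cutoff}\subset M_{n,p_+}$, knowing that $M_{n,p_-}$ contains a copy of $\Pat_1$ does not immediately imply that $M_n^{\cutoff}$ does. You can repair this by observing that \Cref{lem_emergence_patterns} actually gives $\Theta(np^6)=\Theta(n^{1/14})$ pairwise disjoint copies of $\Pat_1$ in $M_{n,p_-}$ whp, while $|M_n^{\cutoff}\setminus M_{n,p_-}|$ is whp at most $O(\cutoff^{1/2+o(1)})$ and each extra mine can spoil at most a bounded number of these copies, so one survives. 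This works, but it is an honest extra paragraph, not something to wave away.

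The paper sidesteps this entirely. For the bad-box claim it does a direct first-moment computation in the process model (counting $7$-tuples of cells in a common $100\times100$ box and bounding the probability they all receive mines by time $\cutoff$), rather than transferring and citing \Cref{lem_bad_box}. For the $\Pat_1$ claim it uses the identity that $M_n^{\cutoff}$ is $M_{n,p}$ conditioned on having exactly $\cutoff$ mines, together with the crude bound
\[
\P(\Pat_1\not\subset M_n^{\cutoff})\;\le\;\frac{\P(\Pat_1\not\subset M_{n,p})}{\P(M_{n,p}\text{ has exactly }\cutoff\text{ mines})},
\]
taking $p=\cutoff/n$. The numerator is $\le\exp(-n^{\delta})$ by \Cref{lem_emergence_patterns} and the denominator is $\sim(2\pi\cutoff)^{-1/2}$ by Stirling, so the ratio is $o(1)$. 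This trick needs no monotonicity and no survival argument, which is exactly what it buys over your sandwich coupling.
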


\begin{proof}
We start with the second property. Let $E$ be the set of tuples of $7$ cells of $G_n$ such that they all fit in a $100\times 100$ subgrid of $G_n$. The cardinality of $E$ is easily bounded by $\binom{10000}{7}n$.

If a $100\times 100$ subgrid of $M_n^{\cutoff}$ contains $7$ mines or more, then there must exist $(\cell_i)_{i=1…7}\in E$ such that for all $1\leq i\leq 7$, $\cell_i$ gets added at some time $t_i\leq \cutoff$. For each element of $E$, this happens with probability bounded by
\[7!\binom{\cutoff}{7}\left(\frac{2}{n}\right)^7\leq 2^7 n^{-13/12},  \]
and we conclude by a union bound.

To prove the first property, we use the fact that $M_n^t$ has the same law as $\Mnp$ conditioned to have exactly $t$ mines, hence
\begin{align*}
\P(\Pat_1\not\subset M_n^{\cutoff})&= \P(\Pat_1\not\subset \Mnp|\Mnp\text{ contains exactly $\cutoff$ mines})\\
&=\frac{\P(\Pat_1\not\subset \Mnp \text{ and }\Mnp\text{ contains exactly $\cutoff$ mines})}{\P(\Mnp\text{ contains exactly $\cutoff$ mines})}\\
&\leq \frac{\P(\Pat_1\not\subset \Mnp)}{\P(\Mnp\text{ contains exactly $\cutoff$ mines})}
\end{align*}

Let us take $p=\frac{\cutoff}{n}$.
On the one hand, by \Cref{lem_emergence_patterns} applied to $M^*=\empty^{G_n}$, $\P(\Pat_1\not\subset \Mnp)\leq \exp(-n^{1/1000})$ for $n$ large enough. On the other hand, the number of mines in $\Mnp$ is a sum of Bernoulli random variables therefore
\begin{align*}
\P(\Mnp\text{ contains exactly $\cutoff$ mines})&=\binom{n}{\cutoff}\left(\frac{\cutoff}{n}\right)^{\cutoff}\left(1-\frac{\cutoff}{n}\right)^{n-\cutoff}\\
&\sim \frac{1}{\sqrt{2\pi\cutoff}}\frac{n^n}{\cutoff^{\cutoff} (n-\cutoff)^{n-\cutoff}}\left(\frac{\cutoff}{n}\right)^{\cutoff}\left(1-\frac{\cutoff}{n}\right)^{n-\cutoff}\\
&=\frac{1}{\sqrt{2\pi\cutoff}}.
\end{align*}
Thus $\P(\Pat_1\not\subset M_n^{\cutoff})=o(1)$ and we are done.
\end{proof}

\subsection{Almost sure monotonicity}

Although we saw in \Cref{prop_not_monotone} that solvability of Minesweeper is not a monotone property, i.e. during the process, a mine could appear and "destroy" an ambiguous pattern that was created before, this most likely never happens. Here we restrict ourselves to running the process until time $t=n/2$, after that, the first mine that is revealed has at least a chance $1/2$ of being a mine anyways. We will also condition on $\tau$ being sublinear in $n$, something that happens whp by \Cref{lem_tau_vs_bad_box}.

\begin{proposition}\label{prop_monotonicity}
Conditionally on $\tau<n^{0.9}$ and on $M_n^\tau$, whp for all $\tau<t<n/2$, $M_n^t$ contains at least one occurrence of $\Pat_1$ or $\Pat_2$.
\end{proposition}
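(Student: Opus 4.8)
The plan is to show that once $\Pat_1$ or $\Pat_2$ has appeared at time $\tau$, the probability that \emph{all} of the (at most $\LandauO(n^{0.9})$-many, since each requires $6$ mines) occurrences present at time $\tau$ get destroyed by the time we reach $n/2$, without any new occurrence ever being created, is $o(1)$. The key point is that the process from time $\tau$ onwards adds roughly $n/2$ more mines to a configuration $M_n^\tau$ with fewer than $n^{0.9}$ mines, and \Cref{lem_emergence_patterns} guarantees that adding mines at density bounded away from $0$ and $1$ to such a sparse configuration produces many new occurrences of $\Pat_1$ or $\Pat_2$ with overwhelming probability.

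First I would set up the coupling between $M_n^t$ and $M^*_p$. Fix $M^* = M_n^\tau$; this has fewer than $n^{0.9}$ mines by hypothesis. For a suitable constant density, say $p = 1/4$, the configuration $M^*_p$ of \Cref{lem_emergence_patterns} is obtained from $M^*$ by independently filling each non-mine cell with probability $1/4$. The number of extra mines added is a sum of $\geq n - n^{0.9}$ independent Bernoullis of parameter $1/4$, hence concentrated around $n/4$; in particular it lies strictly below $n/2 - n^{0.9}$ with probability $1 - o(1)$. So on that event, $M^*_p$ is dominated by $M_n^{n/2}$ in the sense that $M^*_p$ can be coupled to be a subconfiguration of $M_n^{n/2}$ — more precisely, the set of mines of $M^*_p$ is a subset of the set of mines of $M_n^{n/2}$, with the extra mines of $M_n^{n/2}$ beyond $M^*$ being a uniformly random superset. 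Crucially, the occurrences of $\Pat_1$ and $\Pat_2$ are \textbf{monotone} in the sense that only \emph{mines} are prescribed by a pattern off its two ambiguous cells — wait, that's false, patterns prescribe empty cells too. So instead I would argue differently: apply \Cref{lem_emergence_patterns} directly with $M^* = M_n^\tau$ and $p$ chosen so that $M^*_p$ has the same law as $M_n^t$ for the relevant $t$.

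Concretely: for each fixed $t$ with $\tau < t < n/2$, condition on $M_n^\tau$; then $M_n^t$ has the same law as $M^*_p$ conditioned on having exactly $t$ mines, where $M^* = M_n^\tau$ and $p = (t - |M^*|)/(n - |M^*|)$, which satisfies $\cut \ll p < 1/2$ for all $t$ in the range $[\,2n^{0.9},\, n/2\,)$ (and for $\tau < t < 2n^{0.9}$ we instead invoke \Cref{lem_tau_vs_bad_box}-type reasoning, or simply note $\tau < t$ already forces an occurrence to be present, though it might have been destroyed — so this small range does need the argument too, just with a slightly different $p$; one can take $t$ up to a constant multiple of $n^{0.9}$ and still have $p \gg \cut$). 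By \Cref{lem_emergence_patterns}, $\P(\Pat_1, \Pat_2 \not\subset M^*_p \mid M^*) \leq \exp(-\Omega(np^6))$, and since $p \geq \cut = n^{-1/6}$ we get $np^6 \geq 1$, but we need much more decay than that to survive a union bound over $t$. Here the good news is that for $t$ bounded below by, say, $n^{5/6+\delta}$, we have $np^6 \geq n \cdot n^{-1+6\delta} = n^{6\delta} \to \infty$ polynomially, giving decay $\exp(-n^{\Omega(1)})$; then transferring from $M^*_p$ to the conditioned law $M_n^t$ costs a factor $1/\P(M^*_p \text{ has exactly } t \text{ mines}) = \LandauO(\sqrt{n})$ by a local CLT estimate exactly as in the proof of \Cref{lem_tau_vs_bad_box}, which is absorbed. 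A union bound over the $\leq n$ values of $t$ then costs only another polynomial factor, still absorbed by $\exp(-n^{\Omega(1)})$. For the remaining range $\tau < t \leq n^{5/6+\delta}$ the bound $\exp(-\Omega(np^6))$ is too weak; there I would instead use monotonicity of a weaker event — namely, that $M_n^t$ contains an occurrence is \emph{not} monotone, but "contains at least one of the original $\tau$-time occurrences, none of whose at most $6$ mines has been covered over" — no; cleaner is to just push $\cutoff$ up: redefine the threshold and note that for $t$ in the short initial window one can bound $\P(\text{no occurrence at time } t) \leq \P(\text{no occurrence at time } n^{5/6+\delta})$ is FALSE since the property isn't monotone.

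The honest resolution of this last wrinkle — and the step I expect to be the main obstacle — is to handle the non-monotonicity head-on for the short window $\tau < t \leq n^{5/6+\delta}$ by a direct first/second-moment computation: the expected number of occurrences of $\Pat_1 \cup \Pat_2$ at time $t$ is $\Theta(n (t/n)^6) = \Theta(t^6 / n^5)$, which is $\gg 1$ once $t \gg n^{5/6}$; a second-moment/Chen–Stein argument (as sketched in the "At criticality" subsection) shows this count is concentrated, hence positive whp, and a union bound over the short window goes through because there are only polynomially many values of $t$ and each failure probability is at most $o(1)$ — but "at most $o(1)$, union bound over $n$ terms" does not close. So one really does need, uniformly over this window, a failure probability $o(1/n)$. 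That follows from the Janson-type lower-tail bound: $\P(\text{no occurrence at time } t) \leq \exp(-\Theta(\mu))$ where $\mu = \Theta(t^6/n^5)$ is the expected count, valid because occurrences are "almost independent" (each lives in a bounded box, so the dependency graph has bounded degree in a strong sense, and $\bar\Delta$, the sum of pair-correlations over overlapping pairs, is of smaller order than $\mu$). Thus for $t \geq n^{5/6} (\log n)^{1/6}$ we get $\mu \geq (\log n)$, giving $\exp(-\Theta(\log n)) = n^{-\Theta(1)}$, and for the truly tiny window $\tau < t < n^{5/6}(\log n)^{1/6}$ we simply observe this whole window has length $o(n)$ but more to the point: at the lower end of the process, $\tau$ itself is whp of order $n^{5/6}$ up to constants (heuristically, $\tau \asymp n^{5/6}$ since that is when $\EE[\text{count}] \asymp 1$) — but we have only conditioned on $\tau < n^{0.9}$, not on a lower bound. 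Still, for $t$ in $(\tau, n^{5/6}(\log n)^{1/6})$ we may lose: here I would argue that we can shrink the window by choosing to prove the statement only for $\tau + \omega(1) \cdot (\text{something}) < t$, or — better and what I would actually write — strengthen the conditioning. In fact \Cref{lem_tau_vs_bad_box} already lets us condition on much more; and the cleanest writeup conditions on the full configuration $M_n^\tau$ and runs the density argument with $p$ ranging over $[\,\tau/n,\, 1/2\,]$, getting a \emph{single} application of \Cref{lem_emergence_patterns} per value of $t$, and simply restricts attention to the statement for $t \geq 2\tau$ (since for $\tau \le t < 2\tau$, the number of added mines is at most $\tau < n^{0.9}$, a region handled by noting that removing $\le n^{0.9}$ mines cannot destroy all $\Theta(1)$-or-more occurrences when $\tau$ is itself not too small — and if $\tau$ is small, say $\tau \le n^{5/6}$, then whp $\tau \ge c n^{5/6}$ for the pattern to have appeared at all, a matching lower bound provable by the same first-moment method). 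I expect the write-up to require care in stitching these ranges, but no new ideas beyond \Cref{lem_emergence_patterns}, a Janson-type lower-tail bound, and the local CLT transfer already used for \Cref{lem_tau_vs_bad_box}.
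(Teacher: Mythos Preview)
Your large-$t$ argument is exactly the paper's: set $M^* = M_n^\tau$, write $M_n^{\tau+t}$ as $M^*_p$ conditioned on its mine count, apply \Cref{lem_emergence_patterns}, pay a $\sqrt{n}$ local-CLT factor, and union-bound over $t$. The paper takes the cutoff at $t = n^{0.9}$ (so $p \ge n^{-0.1}$, $np^6 \ge n^{0.4}$), and this part of your sketch is fine.

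The gap is in the short window. You correctly see that \Cref{lem_emergence_patterns} gives nothing useful once $np^6$ is subpolynomial, and you then cycle through Janson, second moments, and lower bounds on $\tau$ without closing the argument. You actually brush past the right idea and dismiss it: for $\tau < t \le \tau + n^{0.9}$, simply track the \emph{specific} occurrence that appeared at time $\tau$. It lives in a fixed $8\times 8$ box $B^*$ with $58$ empty cells; the occurrence persists as long as no new mine lands in $B^*$. At each step there are at least $n/2$ empty cells, so the probability a given step hits $B^*$ is at most $128/n$, and over $n^{0.9}$ steps the survival probability is at least $(1 - 128/n)^{n^{0.9}} = 1 - o(1)$. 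That is the entire short-range argument in the paper, and it needs no Janson bound, no control on the size of $\tau$, and no monotonicity. Your phrase ``removing $\le n^{0.9}$ mines cannot destroy all $\Theta(1)$-or-more occurrences'' has the right instinct but the wrong mechanism: mines are being \emph{added}, and a single addition inside $B^*$ can destroy the pattern; the point is that so few are added that whp none land there.
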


\begin{proof}
Let $B^*$ the $8\times 8$ subgrid of $M_n^\tau$ that contains the pattern $\Pat_1$ or $\Pat_2$. For all $1\leq t\leq n^{0.9}$, at step $\tau+t$ there are more than $n/2$ "free cells" so the new mine is added outside of $B^*$ with probability greater than $1-\frac{128}{n}$. Since each step is independent, the probability that no mine is added to $B^*$ between times $\tau+1$ and $\tau+n^{0.9}$ is 
\[\left(1-\frac{128}{n}\right)^{n^{0.9}}=1-o(1),\]
and thus whp, for all $\tau<t<\tau+n^{0.9}$, $M_n^t$ contains at least one occurrence of $\Pat_1$ or $\Pat_2$ (at $B^*$).

Now we focus on $n^{0.9}<t<n/2$. We will use the same reasoning as in \Cref{thm_hitting_time} hence we only sketch the proof. If we set $M^*=M_n^\tau$, notice that $M_n^{\tau+t}$ is $M^*_p$  conditioned on having exactly $\tau+t$ mines.
Fix a $t$ with $n^{0.9}<t<n/2$ and set $p=\frac{t}{n-\tau}$, we have
\[\P(\Pat_1,\Pat_2\not \subset M_n^{\tau+t})\leq \frac{\P(\Pat_1,\Pat_2 \not \subset M^*_p)}{\P(M^*_p\text{ contains exactly $\tau+t$ mines})}.\]

On the one hand, the probability that $M^*_p$ has exactly $\tau+t$ mines is equivalent to $\frac{1}{\sqrt{2\pi t}}\geq \frac{1}{\sqrt{2\pi n}}$.
On the other hand, by \Cref{lem_emergence_patterns},  $M^*_p$ contains no occurrence of $\Pat_1$ or $\Pat_2$ with probability smaller than $\exp(-Cnp^6)\leq \exp(-Cn^{0.4})$. Therefore, for $n$ large enough and conditionally on $M_n^\tau$, for every $n^{0.9}<t<n/2$, $M_n^{\tau+t}$ contains no occurrence of $\Pat_1$ or $\Pat_2$ with probability smaller than $\exp(-n^{0.3})$, and we conclude with a union bound that for all $\tau+n^{0.9}<t<n/2$, $M_n^t$ contains at least one occurrence of $\Pat_1$ or $\Pat_2$.
\end{proof}

\bibliography{bibli}{}
\bibliographystyle{plain}

\end{document}